\documentclass{article}
\usepackage{graphicx} 
\usepackage{amsmath}
\usepackage{amssymb}
\usepackage{amsthm}
\usepackage{amssymb}
\usepackage{mathdots}
\usepackage{enumitem}
\usepackage{tabularx}
\usepackage{color}
\title{Labeled histories for multifurcating trees}
\usepackage[margin=1in]{geometry}
\usepackage{stmaryrd}
\usepackage{lscape}

\renewcommand{\geq}{\geqslant}
\renewcommand{\leq}{\leqslant}

\newtheorem{theorem}{Theorem}
\newtheorem{prop}[theorem]{Proposition}
\newtheorem{corollary}[theorem]{\bf Corollary}
\newtheorem{lemma}[theorem]{Lemma}
\newtheorem{defi}[theorem]{Definition}
\newtheorem{conjecture}[theorem]{Conjecture}

\title{Maximally probable tree topologies with $r$-furcation}
\author{Emily H.~Dickey$^*$ \& Noah A.~Rosenberg\thanks{Department of Biology, Stanford University, Stanford, CA 94305 USA}}
\date{\today}
\begin{document}
\maketitle

\begin{abstract} \noindent For a specific rooted labeled tree topology, a labeled history is a sequence of branchings that give rise to that labeled topology as it unfolds over time. Here, for $r$-furcating trees, we use a connection with Huffman trees from information theory to identify maximally probable rooted trees---unlabeled $r$-furcating topologies whose labelings each have a number of labeled histories greater than or equal to those of all other labeled topologies. Our characterization of the unique maximally probable $r$-furcating unlabeled topology generalizes the Harding--Hammersley--Grimmett result identifying the maximally probable bifurcating unlabeled topology, and it provides a new proof for that result. We present a conjecture for the maximally probable $r$-furcating unlabeled topology if labeled histories are tabulated allowing for simultaneous branching events across multiple internal nodes of a tree.
\end{abstract}

\vspace{.2cm}
\noindent{\bf Keywords:} Huffman trees, labeled histories, majorization, multifurcation, phylogenetics

\vspace{.2cm}
\noindent{\bf Author for correspondence:} Noah A.~Rosenberg. Email: noahr@stanford.edu.

\section{Introduction}

The concept of a \textit{labeled history} arises when considering phylogenetic models that produce trees as a consequence of evolutionary processes that unfold over time~\cite{King23, Steel16}. For a leaf-labeled tree with $n$ leaves that have mutually distinct labels, a labeled history represents the sequence of branching events that have given rise to the labeled tree structure. Labeled histories are useful particularly in the setting of the Yule--Harding model on bifurcating labeled topologies~\cite{King23, Steel16}, under which each of the $n! \, (n-1)!/2^{n-1}$ possible labeled histories~\cite{Edwards1970} is equally likely to be produced by the evolutionary process. 

For a fixed number of leaves $n \geq 2$, a \textit{maximally probable} tree is an unlabeled topology whose labelings each possess a number of labeled histories greater than or equal to those of any other unlabeled topology~\cite{Degnan06}. Proving a conjecture of Harding \cite{Harding1971}, Hammersley \& Grimmett \cite{Hammersley74} characterized the unique maximally probable tree among rooted bifurcating trees with a fixed number of leaves $n$.

Generalizing from bifurcating trees, we have recently been developing the combinatorics of labeled histories in the setting of $r$-furcating trees~\cite{Dickey25, Dickey26}---which arise in contexts such as adaptive radiation, large family sizes, and pathogen transmission. In particular, in \cite{Dickey25}, we obtained a variety of results concerning labeled histories for $r$-furcating trees, also termed \emph{strictly} $r$-furcating trees, in which the number of immediate descendants of each internal node of a rooted tree is equal to $r$ for a fixed constant $r \geq 2$ shared by all internal nodes. Our results included the enumeration of the labeled histories for a fixed $r$-furcating labeled topology. We also presented a conjecture for the identity of the maximally probable tree shape with $r$-furcation.

Here, we identify the unique maximally probable tree in the setting of $r$-furcating trees, $r \geq 2$, proving our conjecture. That is, for $n \geq r$ and $n=w(r-1)+1$ for integers $w \geq 0$, we determine the shape of the unique unlabeled topology whose labelings possess more labeled histories than any other labeled topology. Our approach makes use of a connection to tree problems in information theory; specifically, we use the Huffman algorithm~\cite[Section 5.6, p.~92]{Cover05} and the trees it generates. Because our approach identifies the maximally probable tree for each $r \geq 2$, a consequence of our characterization is that we obtain a new proof of the Harding--Hammersley--Grimmett result describing the maximally probable bifurcating unlabeled topology. 

Section \ref{app:def} presents definitions concerning rooted trees. Section \ref{app:previous_results} reviews previous work on labeled histories and maximally probable trees, and it recasts the maximization problem as a minimization of a sum of logarithms. Section \ref{app:majorization} presents definitions and lemmas about majorization. Section \ref{app:huffman} discusses the Huffman algorithm from information theory and trees constructed through the algorithm. In particular, we show that the Huffman algorithm produces a unique tree topology on a uniform weight set. Section \ref{app:completing} combines results from Sections \ref{app:majorization} and \ref{app:huffman} to complete the characterization of the maximally probable $r$-furcating tree. Finally, Section \ref{app:withsim} presents conjectures for the maximally probable $r$-furcating tree in a setting that allows simultaneous branching. We conclude in Section \ref{app:discussion} with a discussion.

\section{Definitions}\label{app:def}

Definitions largely follow Dickey \& Rosenberg \cite{Dickey25}, tracing to Steel~\cite{Steel16} and King \& Rosenberg~\cite{King23}. For a rooted tree $T$, nodes are \emph{leaf nodes} or \emph{internal nodes}; the unique \emph{root node} is included among internal nodes. The number of internal nodes of $T$ is $w(T)$; the number of leaves is $|T|$. For a leaf-labeled tree $T$, the \emph{labeled topology} of $T$ is its topological structure together with its leaf labels, all of which are distinct from one another. The \emph{tree shape} or \emph{unlabeled topology} of $T$ is the topological structure without the leaf labels.

For two nodes $u,v$ of $T$, $u$ is \emph{descended} from $v$ and $v$ is \emph{ancestral} to $u$ if $v$ lies on the path from the root to $u$. A node is both ancestral to itself and descended from itself. 

For $r \geq 2$, in an \emph{$r$-furcating tree}, also termed \emph{strictly} $r$-furcating, each internal node has exactly $r$ immediate descendant nodes. A \emph{cherry node} has exactly two child nodes, both of which are leaves. The generalization for $r$-furcating trees is a \emph{broomstick node}, an internal node whose $r$ children are all leaves.

For a tree $T$ whose root has immediate subtrees $T_1, T_2, \ldots, T_r$, we write $T = T_1 \oplus T_2 \oplus \ldots \oplus T_r$. If some of the $T_i$ are identical, then we abbreviate the notation; for example, if $T_1 = T_2 = \ldots = T_j$, then we write $T = jT_1 \oplus T_{j+1} \oplus T_{j+2} \ldots \oplus T_r$. Empty subtrees are ignored, so that, for example, $T_1 \oplus T_2 \oplus \emptyset = T_1 \oplus T_2$. 
 
For a tree $T$, each node is associated with a \emph{time}. Leaves all have the same time. In the classic Yule--Harding model for bifurcating trees, internal nodes have mutually distinct times, and the tree has \emph{non-simultaneous} branching. Given a labeled topology for a rooted tree $T$ with $w$ internal nodes and non-simultaneous branching, a \emph{labeled history} for $T$ is a bijection $f$ from the set of internal nodes of $T$ to $\{1, 2, \ldots, w(T)\}$, so that if node $u$ is descended from node $v$ in $T$ and $u \neq v$, then $f(u) < f(v)$. A labeled history can be viewed as the temporal sequence of internal nodes, with the convention here that the numbers assigned to nodes increase backward in time along genealogical lines, and the root is assigned the value $w(T)$.

Among $r$-furcating labeled topologies with fixed number of leaves $n$ and fixed $r \geq 2$, a \emph{maximally probable} labeled topology is a labeled topology whose number of labeled histories is greater than or equal to that of all other labeled topologies~\cite{Degnan06}. Because each labeling of an unlabeled topology gives rise to the same number of labeled histories, we use \emph{unlabeled topologies} to indicate the maximally probable labeled topologies. In a slight abuse of terminology, we refer to unlabeled topologies as maximally probable, understanding that we refer to associated arbitrarily labeled topologies. The term \emph{maximally probable} arises from the Yule--Harding probability model, in which the probability of a specified (bifurcating) labeled topology is proportional to its number of labeled histories---so that labeled topologies with the most labeled histories are the most probable labeled topologies under the model. The Yule--Harding model is for bifurcating trees, but we continue to use the term \emph{maximally probable} in the setting of $r$-furcation: a maximally probable unlabeled topology is an unlabeled topology whose labelings possess a number of labeled histories greater than or equal to those of the labelings of all other unlabeled topologies. 

Although the definition does not imply that only one maximally probable unlabeled topology exists among $r$-furcating unlabeled topologies with $n$ leaves, we will see in Section \ref{app:completing} that with $r$-furcation at fixed $r \geq 2$, there exists a unique maximally probable unlabeled topology among $r$-furcating unlabeled topologies with $n$ leaves. We denote this unique maximally probable unlabeled topology by $U_n^*$, understanding that the associated $r$ will be clear from the context.

\section{Review of previous results}\label{app:previous_results}

For bifurcating trees, Hammersley \& Grimmett~\cite{Hammersley74} obtained the unique unlabeled tree topology whose labeled topologies possess the largest number of labeled histories among labeled topologies with $n$ leaves. Our main result, presented in Section \ref{app:completing}, is a generalization of this result to $r$-furcating trees, $r \geq 2$. The generalization provides a new proof of the result of Hammersley \& Grimmett~\cite{Hammersley74} and proves our Conjecture 13 of \cite{Dickey25}. First, we recall the maximally probable unlabeled tree topology in the bifurcating case.

\begin{theorem}[\cite{Hammersley74}]
\label{thm: hammersley}
The unique unlabeled topology whose labelings have the largest number of labeled histories among bifurcating labeled topologies with $n$ leaves takes the form $U_n^* = U_t^* \oplus U_{n-t}^*$, where for $n \geq 3$,
\begin{align*}
t = 2^{ \lfloor \log_2 (\frac{n-1}{3})\rfloor + 1}.
\end{align*}
\end{theorem}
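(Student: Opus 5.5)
Our plan is to follow the route the introduction announces: recast the maximization as a minimization, identify the minimizer with a Huffman tree on uniform weights, and then read off the root split. The starting point is the standard count of labeled histories for a bifurcating labeled topology $T$ with $n$ leaves and internal nodes $v$,
\[
N(T) = \frac{(n-1)!}{\prod_{v} (|T_v|-1)},
\]
where $|T_v|$ is the number of leaves below $v$. Maximizing $N(T)$ is therefore equivalent to minimizing $\sum_v \log(|T_v|-1)$ over the $n-1$ internal nodes. Because each internal node contributes $|T_v|-1$, which is the number of internal nodes in its subtree, this is a sum over a hierarchical merging of values.

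The key reformulation views the tree as built by successive merges. Merging subtrees with $a$ and $b$ leaves creates a node of value $a+b-1$. If we track $m(v) = |T_v|-1$, with leaves having $m=0$, the recursion is $m = m_1 + m_2 + 1$. This is a Huffman-type merge on weights that are all equal at the leaves. The multiset $\{m(v)\}$ of internal-node values is then compared across trees by majorization. We would show that for any tree, the sorted vector of these values weakly submajorizes the vector of the balanced (Huffman) tree. Since $\log$ is concave and increasing, a Schur-concavity argument will not suffice directly; instead we expect to need the reverse inequality: the Huffman tree's vector is majorized in the appropriate sense, so that the concave sum is minimized there. Uniqueness will come from showing that the Huffman algorithm on a uniform weight set yields a unique unlabeled topology, resolving ties by the fact that equal-weight merges produce isomorphic subtrees.

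With the minimizer characterized as the Huffman tree on $n$ equal weights, the final step is computational. Huffman on $n$ equal weights first pairs leaves into cherries, then pairs those, and so on, producing complete binary blocks of size $2^k$ plus leftovers. Writing $2^k \le n < 2^{k+1}$, the tree consists of perfectly balanced pieces of sizes $2^k$ and $2^{k-1}$. The root split is $t = 2^{k}$ whenever $n - 2^{k}$ is at most $2^k$ but large enough, otherwise $2^{k-1}$. Working through the boundary shows that the larger subtree is a power of two, or the smaller one is, with threshold $n-1 \ge 3\cdot 2^{j}$. This yields $t = 2^{\lfloor\log_2((n-1)/3)\rfloor+1}$, with both subtrees again Huffman trees and hence $U^*$ recursively.

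The main obstacle is the majorization step. The objective is not a linear weighted path length but a sum of $\log$ of node sizes, so the standard Huffman optimality proof does not transfer directly. We would try an exchange argument, swapping subtrees between nodes to show that the objective strictly decreases unless the merges follow the Huffman greedy order. We would handle the $-1$ offset in $|T_v|-1$ carefully. The floor-formula verification is routine casework on $n$ in $[3\cdot 2^{j}+1,\,3\cdot 2^{j+1}]$.
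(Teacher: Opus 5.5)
There is a genuine gap at the step you yourself flag as the main obstacle, and the direction you propose for it is backwards. You say you would show that every tree's vector of internal-node values weakly submajorizes the Huffman tree's vector, or that the Huffman vector is ``majorized in the appropriate sense.'' Both claims are false. Take $n=6$. The Huffman tree (root split $(2,4)$) has $m$-vector $(5,3,1,1,1)$, and the $(3,3)$ tree has $(5,2,2,1,1)$. The sums agree, but the top-two partial sums are $8>7$, so it is the Huffman vector that majorizes the other, not conversely. Correspondingly, $N=120/15=8$ versus $120/20=6$. Your version would also give the wrong conclusion: by Schur-concavity of $\sum\log$, a majorized vector \emph{raises} the concave sum.

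So the Huffman tree is not the ``balanced'' tree; it is the tree whose small internal sizes are as small as possible. The missing ingredient is the Glassey--Karp weight-sequence lemma (Lemma~\ref{lemma:hu_majorization}). For every tree $T$ it gives $\omega(T)\prec^w\omega(S)$: for each $k$, the sum of the $k$ smallest internal weights is minimized by the Huffman tree $S$. It also says that $S$ is an $H$-tree if and only if this holds. Subtracting $\vec{1}$ preserves the relation, since every partial sum shifts by $k$. The map $z\mapsto-\sum\log z_i$ is strictly decreasing and strictly Schur-convex on $\mathbb{R}^w_{>0}$, so Lemma~\ref{lemma:Schur_weakly_maj} yields strict inequality whenever the vectors differ (Lemma~\ref{lemma:log_sum}). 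A Schur-type argument therefore does suffice directly, provided you use weak \emph{super}majorization, which is the notion matched to a decreasing Schur-convex function. Plain majorization is unavailable because total path lengths differ across trees.

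Your fallback, an unspecified exchange argument, does not fill the hole. Swapping subtrees of sizes $a<b$ changes $m(v)$ by $\pm(b-a)$ along two entire paths up to their common ancestor. There is no evident monotonicity of $\sum\log(m(v)-1)$ under such moves, which is why the full weight-sequence comparison is needed.

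Two secondary points also need repair. First, uniqueness of the maximizer requires $\omega(T)\neq\omega(S)$ for every $T\neq S$. The paper gets this from the ``if'' direction of Glassey--Karp combined with uniqueness of the $H$-tree. Second, uniqueness of the $H$-tree is not ``the fact that equal-weight merges produce isomorphic subtrees.'' That statement fails for general weights (Figure~\ref{fig:distinct_trees_same_weight}). For $\vec{1}$ it holds only because of the normal-sequence invariant (Lemma~\ref{lemma:merge_op}). At every stage, all current weights lie in some $[2^k,2^{k+1}]$, with at most one not a power of $2$, and power-of-two weights are perfect trees. That same invariant is what rigorously pins down the root split: the final two weights lie in such an interval and at least one is a power of $2$, which gives the threshold comparing $n$ with $3\cdot 2^{p-1}$.

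Your global description of the Huffman tree as blocks of sizes $2^k$ and $2^{k-1}$ is inaccurate; for $n=7$ the pieces hanging off the spine are $4,2,1$. Your root-level split and the recursion into Huffman subtrees are nonetheless correct.
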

$U_1^*$ and $U_2^*$ are specified trivially, each as the only unlabeled topology with the associated value of $n$. For $n \geq 3$, $U_n^*$ is formed by joining the maximally probable unlabeled topologies of two smaller sizes to a shared root. The values $t$ and $n-t$ in the theorem specify the numbers of leaves in the subtrees of the root of $U_n^*$. As $n$ proceeds from 3 to 16, the values of $(t,n-t)$ are equal to (1,2), (2,2), (2,3), (2,4), (4,3), (4,4), (4,5), (4,6), (4,7), (4,8), (8,5), (8,6), (8,7), and (8,8). 

To generalize Theorem~\ref{thm: hammersley} to $r$-furcation, consider $r \geq 2$, and let $n=w(r-1)+1$ for an integer $w \geq 1$, as the number of leaves in an $r$-furcating tree is 1 more than a multiple of $r-1$. The number of labeled histories $N(T)$ for an $r$-furcating labeled topology, $T$, with $n$ leaves and immediate subtrees $T_1, T_2, \ldots, T_r$ can be found recursively by~\cite[eq.~3.6]{Dickey25}
$$N(T) = {\frac{|T| - 1}{r-1}-1 \choose \frac{|T_1| - 1}{r-1}, \frac{|T_2|-1}{r-1}, \ldots, \frac{|T_r| - 1}{r-1}} N(T_1) N(T_2) \ldots N(T_r).$$
$N(T)$ can also be expressed in a closed form. Write $V^{0}(T)$ for the set of internal nodes of $T$, and let $m(v)$ be the number of leaves descended from internal node $v$.

\begin{prop} [\cite{Dickey25}, Prop.~8]
\label{prop:explicit_num_LH}
The number of labeled histories for an $r$-furcating labeled topology $T$ with $n$ leaves, $N(T)$, satisfies $N(T)=1$ for $n=1$, and for $n = w(r-1)+1$ with $w \geq 1$, 
\begin{align*}
N(T) = \frac{\big(\frac{n-1}{r-1}\big)!}{\prod_{v \in V^0(T)}\big[\frac{m(v)-1}{r-1}\big]}.
\end{align*}
\end{prop}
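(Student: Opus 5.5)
We prove the closed form by strong induction on $w = \frac{n-1}{r-1}$, using the recursion for $N(T)$ stated just before the proposition, eq.~3.6 of \cite{Dickey25}.

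\textbf{Setup.} For each internal node $v$, write $k(v) = \frac{m(v)-1}{r-1}$. This equals the number of internal nodes of the subtree rooted at $v$. Indeed, an $r$-furcating tree with $k$ internal nodes has $k(r-1)+1$ leaves, since each branching replaces one lineage by $r$. In particular, $k(v)$ is a positive integer, so the product in the denominator is well defined and nonzero. For the root $\rho$ we have $k(\rho) = w$, and for the immediate subtrees $T_1,\ldots,T_r$ we write $w_i = \frac{|T_i|-1}{r-1}$. Counting internal nodes gives $w_1+\cdots+w_r = w-1$, which is exactly what makes the multinomial coefficient in the recursion meaningful.

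\textbf{Base case.} If $n=1$, then $N(T)=1$ by convention. If $w=1$, then $T$ is a single broomstick node. The formula gives $1!/1 = 1$, which is correct: there is one internal node and hence one labeled history.

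\textbf{Inductive step.} Let $w \geq 2$. Each $T_i$ is either a leaf, with $w_i=0$ and $N(T_i)=1$, or an $r$-furcating tree with $w_i<w$. In the latter case the induction hypothesis gives
\[
N(T_i) = \frac{w_i!}{\prod_{v\in V^0(T_i)} k(v)}.
\]
The same expression also covers leaves, since it reads $0!/(\text{empty product}) = 1$. Substituting into the recursion, we get
\[
N(T) = \frac{(w-1)!}{w_1!\cdots w_r!}\prod_{i=1}^r \frac{w_i!}{\prod_{v\in V^0(T_i)} k(v)} = \frac{(w-1)!}{\prod_{v\in V^0(T)\setminus\{\rho\}} k(v)}.
\]
This uses the fact that $V^0(T)$ is the disjoint union of $\{\rho\}$ and the sets $V^0(T_i)$. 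Finally, multiplying numerator and denominator by $k(\rho)=w$ yields $w!/\prod_{v\in V^0(T)} k(v)$, which is the claimed formula.

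\textbf{Where care is needed.} No step is a real obstacle. The points requiring attention are bookkeeping:
\begin{itemize}
\item verifying that $w_1+\cdots+w_r=w-1$, so the multinomial matches the recursion;
\item handling leaf subtrees uniformly through the empty-product convention;
\item checking that each label $m(v)$ computed within $T_i$ equals the corresponding label computed within $T$.
\end{itemize}
The last point holds because descendant leaf sets are intrinsic to the subtree.

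As a sanity check, the formula also follows from the standard count of linear extensions of a rooted forest poset: the number is $w!$ divided by the product of subtree sizes. Here the poset is on internal nodes, and the subtree size at $v$ is $k(v)$. This could serve as an alternative proof.
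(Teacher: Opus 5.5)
Your proof is correct: strong induction on $w$ through the recursion quoted just before the proposition (eq.~3.6 of \cite{Dickey25}) is the natural route, with the $w_i!$ in the multinomial cancelling the numerators of the subtree formulas and the root supplying the missing factor $k(\rho)=w$. The paper itself gives no proof here and simply cites \cite{Dickey25}; your alternative via the hook-length count of linear extensions of the internal-node tree poset, with hook $k(v)=\frac{m(v)-1}{r-1}$, is also a valid independent proof.
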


For an unlabeled topology $T$, we write $N(T)$ to tabulate labeled histories for a labeled topology obtained by an arbitrary labeling of the leaves of $T$. We seek to characterize all unlabeled topologies
$T^*$ such that 
\begin{equation}
\label{eq:argmax}
T^* \in \arg \max_{\{T:|T|=n\}} N(T),
\end{equation}
where the maximization is over the set of ($r$-furcating) unlabeled topologies with $n$ leaves. Using the monotonicity of the logarithm, the maximization of $N(T)$ has the following equivalence:
\begin{align}
    \max_{\{T : |T| = n\}} N(T) &\iff \max_{\{T : |T| =n\}} \frac{\big(\frac{n-1}{r-1}\big)!}{\prod_{v \in V^0(T)}\big[\frac{m(v)-1}{r-1}\big]} \nonumber \\
    &\iff \min_{\{T:|T|=n\}} \prod_{v \in V^0(T)}\bigg[\frac{m(v)-1}{r-1}\bigg] \nonumber \\
    &\iff \min_{\{T:|T|=n\}} \sum_{v \in V^0(T)} \log \big[ m(v) - 1 \big],
\label{eq:min}
\end{align}
noting that we need not consider terms that only involve the fixed constants $n$ and $r$. With our minimization characterized in eq.~\ref{eq:min}, we now present results from majorization theory, which will be used in our proofs. 


\section{Majorization lemmas}
\label{app:majorization}

We continue with definitions and lemmas from majorization theory, based on Marshall et al.~\cite{Marshall11}.

\begin{defi} [\cite{Marshall11}, Definition 1.A.1, p.~8] \label{def:majorize}
    For $x, y \in \mathbb{R}^n$, with the elements of $x, y$ written in non-increasing order, $x_i \geq x_{i+1}$ and $y_i \geq y_{i+1}$ for all $i \in \{1, 2, \ldots, n-1\}$, we say that $y$ \emph{majorizes} $x$, $x \prec y$, if
    \begin{enumerate}[label = (\roman*)]
        \item $\sum_{i=1}^k x_i \leq \sum_{i=1}^k y_i$ for all $k$, $1 \leq k \leq n-1$, and
        \item $\sum_{i=1}^n x_i = \sum_{i=1}^n y_i$.
    \end{enumerate}
\end{defi}
\begin{defi} [\cite{Marshall11}, Definition 3.A.1, p.~80] \label{def:Schur_convex}
    A real-valued function $\phi$ defined on a set $\mathcal{A} \subset \mathbb{R}^n$ is said to be \emph{Schur-convex} on $\mathcal{A}$ if for all $x, y \in A$
$$x \prec y \implies \phi(x) \leq \phi(y).$$
If, in addition, $\phi(x) < \phi(y)$ whenever $x \prec y$ but $x$ is not a permutation of $y$, then $\phi$ is said to be \emph{strictly} Schur-convex on $\mathcal{A}$.
 \end{defi}
\noindent
We will need a result about strict Schur-convexity of a sum of strictly convex functions in individual variables.
\begin{prop} [\cite{Marshall11}, 3.C.1.a, p.~92] \label{prop:strict_Schur}
    Let $I \subset \mathbb{R}$ be an interval, and for $x \in \mathbb{R}^n$, let $\phi(x) = \sum_{i=1}^n g(x_i)$, where $g:I \to \mathbb{R}$. If $g$ is strictly convex on $I$, then $\phi$ is strictly Schur-convex on $I^n$.
\end{prop}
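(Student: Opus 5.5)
The plan is to reduce strict Schur-convexity to the effect of elementary ``Robin Hood'' transfers (T-transforms) between two coordinates, and then to use strict convexity of $g$ on each such transfer. Because $\phi(x)=\sum_{i=1}^n g(x_i)$ is symmetric in its arguments, we may assume that $x$ and $y$ are both written in non-increasing order, as in Definition~\ref{def:majorize}. Suppose $x \prec y$ with $x,y\in I^n$, and suppose $x$ is not a permutation of $y$. The goal is to produce a finite chain
$$y=z^{(0)},\,z^{(1)},\,\ldots,\,z^{(m)}=x$$
with the following properties: each $z^{(\ell)}$ lies in $I^n$; each $z^{(\ell+1)}$ differs from $z^{(\ell)}$ in exactly two coordinates; and in those two coordinates, two entries $a>b$ are replaced by $a-\delta$ and $b+\delta$ with $0<\delta\le (a-b)/2$ (up to a swap of labels). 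All entries in the chain lie between entries of $y$, so they stay in $I$ because $I$ is an interval.

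The construction of the chain is the main obstacle, and I would do it by induction on the number of coordinates in which $z$ and $x$ differ. Given sorted $z \succ x$ with $z\neq x$, let $j$ be the largest index with $z_j>x_j$. Such an index exists because the partial sums of $z$ dominate those of $x$ and the totals are equal, so $z\neq x$ forces some first discrepancy to be positive. Let $k>j$ be the smallest index with $z_k<x_k$; it exists because the totals agree. Set
$$\delta=\min(z_j-x_j,\;x_k-z_k)>0,$$
decrease $z_j$ by $\delta$, and increase $z_k$ by $\delta$. Three things then need checking:
\begin{enumerate}[label=(\roman*)]
\item the new vector still majorizes $x$;
\item the new vector agrees with $x$ in at least one more coordinate;
\item the transfer is a genuine averaging of $z_j$ and $z_k$, that is, $z_j-\delta\geq z_k+\delta$ up to relabeling.
\end{enumerate}
For (i), the partial sums change only for indices $i$ with $j\le i<k$. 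Between $j$ and $k$ the coordinates of $z$ equal those of $x$, so these partial sums drop by $\delta$ while remaining at least those of $x$, by the choice of $\delta$. Check (iii) follows from $z_j-\delta\ge x_j\ge x_k\ge z_k+\delta$. Re-sorting if necessary does not affect majorization. The induction terminates in at most $n-1$ steps, and since $x\neq y$ as sorted vectors, at least one step is nontrivial.

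It remains to show that each transfer strictly lowers $\phi$. Write the new pair as $(\lambda a+(1-\lambda)b,\,(1-\lambda)a+\lambda b)$ with $\lambda\in[1/2,1)$ and $a>b$. Strict convexity of $g$ on $I$ gives
$$g(\lambda a+(1-\lambda)b)+g((1-\lambda)a+\lambda b)<\lambda g(a)+(1-\lambda)g(b)+(1-\lambda)g(a)+\lambda g(b)=g(a)+g(b).$$
The other coordinates are unchanged, so $\phi(z^{(\ell+1)})<\phi(z^{(\ell)})$ at every step. Chaining these inequalities yields $\phi(x)<\phi(y)$, which is strict Schur-convexity. The non-strict statement follows from the same argument, which also covers the case where $x$ is a permutation of $y$, since then $\phi(x)=\phi(y)$ trivially.
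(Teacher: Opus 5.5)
Your proof is correct. The paper gives no argument of its own for this statement and simply imports it from Marshall et al.\ (3.C.1.a). What you wrote is the standard argument behind that citation: you decompose $x \prec y$ into a finite chain of T-transforms (Marshall et al., Lemma 2.B.1, with exactly your choice of $j$, $k$, and $\delta$), and you apply the two-point strict convexity inequality $g(\lambda a+(1-\lambda)b)+g((1-\lambda)a+\lambda b)<g(a)+g(b)$ to each nontrivial transfer, with the chain staying inside $I^n$ as you note.
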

We will also need the concept of \emph{weak supermajorization}. 
\begin{defi} [\cite{Marshall11}, Definition 1.A.2, p.~12] \label{def:supermajorization}
     For $x, y \in \mathbb{R}^n$, with the elements of $x, y$ written in non-decreasing order, 
$x_i \leq x_{i+1}$ and $y_i \leq y_{i+1}$ for all $i \in \{1, 2 \ldots, n-1\}$, we say that $y$ \emph{weakly supermajorizes} $x$, $x \prec^w y$, if for all $k$, $1 \leq k \leq n$,
$\sum_{i=1}^k y_i \leq \sum_{i=1}^k x_i$. 
\end{defi}
\noindent We note that majorization is commonly defined using non-increasing vectors and weak supermajorization is defined using non-decreasing vectors. 

Next, we will need a definition of ``increasing'' and ``decreasing'' for a multivariate function. 

\begin{defi}[\cite{Marshall11}, p.~637] \label{def:multi_increasing_1}
    For $x, y \in \mathbb{R}^n$, write $x \leq y$ if $x_i \leq y_i$, $i = 1, 2, \ldots, n$. A multivariate function $\phi:\mathbb{R}^n \to \mathbb{R}$ is said to be \emph{increasing} if 
    $$x \leq y \implies \phi(x) \leq \phi(y).$$
    Further, if
    $$x \leq y \text{ and } x \not = y \implies \phi(x) < \phi(y),$$
    then $\phi$ is said to be \emph{strictly} increasing. If $-\phi$ is (strictly) increasing, then $\phi$ is said to be (strictly) decreasing.
\end{defi}

\noindent We present a lemma about supermajorization when a Schur-convex function is applied.
\begin{lemma}[\cite{Marshall11}, 3.A.8.a, p.~87] \label{lemma:Schur_weakly_maj}
Let $\phi$ be a real-valued function defined on a set $\mathcal{A} \subset \mathbb{R}^n$. The following statement holds if and only if $\phi$ is strictly decreasing and strictly Schur-convex on $\mathcal{A}$:
$$x \prec^w y \text{ on } \mathcal{A} \text{ and } x \text{ is not a permutation of } y \implies \phi(x) < \phi(y).$$
\end{lemma}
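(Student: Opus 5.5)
The plan is to prove the two directions separately. The backward direction ($\Leftarrow$) will be a direct specialization. The forward direction ($\Rightarrow$) will rest on a decomposition of weak supermajorization into a componentwise decrease followed by an ordinary majorization. Throughout I will use that a Schur-convex function is symmetric: a vector and any of its permutations majorize each other, so $\phi$ takes the same value on both. This lets me work with sorted vectors freely.

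\emph{The implication holds $\Rightarrow$ $\phi$ is strictly decreasing and strictly Schur-convex.}
\begin{enumerate}[label=(\alph*)]
\item Strictly decreasing: suppose $y \leq x$ componentwise with $y \neq x$. Sorting both vectors into non-decreasing order preserves the componentwise inequality. Hence every partial sum of the $k$ smallest entries of $y$ is at most the corresponding partial sum for $x$, that is, $x \prec^w y$. Since $\sum y_i < \sum x_i$, $y$ is not a permutation of $x$, so the hypothesis gives $\phi(x) < \phi(y)$. This is exactly strict decrease.
\item Strictly Schur-convex: suppose $x \prec y$ in the sense of Definition~\ref{def:majorize}. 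The totals are equal, so the sum of the $k$ smallest entries equals the total minus the sum of the $n-k$ largest. Condition (i) of Definition~\ref{def:majorize} therefore turns into the inequalities of Definition~\ref{def:supermajorization}, giving $x \prec^w y$. The hypothesis then yields strict Schur-convexity.
\end{enumerate}

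\emph{$\phi$ is strictly decreasing and strictly Schur-convex $\Rightarrow$ the implication holds.} The key step is an intermediate vector. Given $x \prec^w y$, I will construct $u$ with $u \leq x$ componentwise and $u \prec y$.
\begin{enumerate}[label=(\alph*)]
\item Construction: write $x$ and $y$ in non-decreasing order, so $x_1 \leq \cdots \leq x_n$ and $y_1 \leq \cdots \leq y_n$. Put $\delta = \sum x_i - \sum y_i$, which is nonnegative by the case $k=n$ of Definition~\ref{def:supermajorization}. Obtain $u$ from $x$ by removing the excess $\delta$ from the top entries: first lower $x_n$, but not below $\max(x_{n-1}, y_n)$. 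If excess remains, then lower the top two entries jointly, and so on.
\item Properties to check: $u$ stays sorted, $\sum u_i = \sum y_i$, and each bottom partial sum $\sum_{i \leq k} u_i$ is still at least $\sum_{i \leq k} y_i$. The last property comes from an induction over the levelling procedure. It uses that $\sum_{i \leq k} x_i \geq \sum_{i \leq k} y_i$ for every $k$ and that the entries being lowered are never pushed below the corresponding upper partial sums of $y$. By the totals argument of part (b) above, these properties are equivalent to $u \prec y$.
\item Chain of inequalities: $\phi(x) \leq \phi(u)$ because $\phi$ is decreasing, and $\phi(u) \leq \phi(y)$ by Schur-convexity.
\item Strictness: if $u \neq x$, the first inequality is strict. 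If $u = x$, then $x \prec y$ with $x$ not a permutation of $y$, so the second inequality is strict.
\end{enumerate}

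The main obstacle is the construction and verification of $u$: checking that the levelling procedure keeps all the partial-sum inequalities. A second point needs care. The argument requires $u \in \mathcal{A}$, so I would state the lemma for sets $\mathcal{A}$ closed under the relevant operations (for example $\mathcal{A} = I^n$ for an interval $I$, as in Proposition~\ref{prop:strict_Schur}). In that case $u$, whose entries lie between $\min y_i$ and $\max x_i$, remains in the domain.
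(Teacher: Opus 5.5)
\textbf{Gap in the ``only if'' direction.} Step (b) does not work, and it cannot be repaired. By Definition~\ref{def:Schur_convex}, strict Schur-convexity includes plain Schur-convexity. That requires $\phi(x)\le\phi(y)$ for \emph{every} pair with $x\prec y$, including pairs where $y$ is a permutation of $x$; in other words, $\phi$ must be symmetric. The hypothesis says nothing about permutation pairs, so (b) gives you only the strict half. You also cannot use your opening remark that Schur-convex functions are symmetric, because here Schur-convexity is what you are trying to prove.

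Here is a counterexample on $\mathcal{A}=(0,\infty)^2$. Let $g(z)=-\log z_1-\log z_2$ and $s_0=(1,2)$. Let $D=\{t:\ s_0\prec^w t \text{ and } t \text{ is not a permutation of } s_0\}$, and set $\phi=g+2\cdot\mathbf{1}_D+\mathbf{1}_{\{(2,1)\}}$. Suppose $x\prec^w y$ and $x$ is not a permutation of $y$.
\begin{itemize}
\item $g(x)<g(y)$ by Lemma~\ref{lemma:log_sum}.
\item $x\in D$ forces $y\in D$. If $y$ were a permutation of $s_0$, the bottom partial sums of $x$ would be squeezed between those of $y$ and $s_0$, making $x$ a permutation of $s_0$.
\item Hence $\phi(x)<\phi(y)$ whenever $x\neq(2,1)$.
\item If $x=(2,1)$, then $y\in D$, so $\phi(y)>g(s_0)+2>g(s_0)+1=\phi(x)$.
\end{itemize}
So $\phi$ satisfies the strict implication. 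Yet $\phi(1,2)=g(s_0)\neq g(s_0)+1=\phi(2,1)$, so $\phi$ is not Schur-convex.

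The ``only if'' direction therefore needs an extra hypothesis. You can assume $\phi$ is symmetric. Alternatively, you can also assume the non-strict implication $x\prec^w y\Rightarrow\phi(x)\le\phi(y)$; applying it to $x$ and a permutation of $x$ gives symmetry. With either addition, your steps (a) and (b) are correct.

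\textbf{The ``if'' direction and the paper.} The paper gives no proof of this lemma. It imports it from Marshall et al.\ and uses only the ``if'' direction, in Lemma~\ref{lemma:log_sum} with $\mathcal{A}=\mathbb{R}^n_{>0}$. For that direction your route is the standard one: take an intermediate $u\le x$ with $u\prec y$, apply monotonicity and then Schur-convexity, and split the strictness into the cases $u\neq x$ and $u=x$. Your caveat about $\mathcal{A}$ is also right. For arbitrary $\mathcal{A}$ the ``if'' direction is false: on $\mathcal{A}=\{(2,2),(1,5/2)\}$ both monotonicity conditions are vacuous, yet $(2,2)\prec^w(1,5/2)$, so any $\phi$ with $\phi(2,2)\ge\phi(1,5/2)$ is a counterexample.

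Your levelling rule should be tightened. The floor at $y_n$ plays no role, and ``lowering the top two entries jointly'' is undefined when they sit at different heights. Instead, simply truncate: set $u_i=\min(x_i,L)$, with $L$ chosen so that $\sum_i u_i=\sum_i y_i$. Then check the partial sums in two ranges.
\begin{itemize}
\item Below the truncated block, the bottom partial sums of $u$ equal those of $x$, so they dominate those of $y$.
\item Across the block, $\sum_{i\le k}(u_i-y_i)$ has non-increasing increments $L-y_k$. It is nonnegative where the block starts and equals zero at $k=n$, so by concavity it is nonnegative throughout.
\end{itemize}
This gives $u\prec y$. It follows that $\min_i y_i\le u_j\le\max_i y_i$ for every $j$, so $u\in I^n$ as you need.
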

\noindent We next present the main result of the section, a lemma that we will use in Section \ref{app:completing} in our maximization of the number of labeled histories over the set of unlabeled topologies.
\begin{lemma} \label{lemma:log_sum}
Let $x, y \in \mathbb{R}_{>0}^n$. If $x \prec^w y$ and $x$ is not a permutation of $y$, then $\sum_{i=1}^n \log(y_i) < \sum_{i=1}^n \log(x_i)$.
\end{lemma}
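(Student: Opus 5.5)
We apply Lemma~\ref{lemma:Schur_weakly_maj} to $\phi(x) = -\sum_{i=1}^n \log(x_i)$ on $\mathcal{A} = \mathbb{R}_{>0}^n$. With this choice the conclusion $\phi(x) < \phi(y)$ reads $-\sum \log x_i < -\sum \log y_i$, which is exactly $\sum_{i=1}^n \log(y_i) < \sum_{i=1}^n \log(x_i)$. So it suffices to verify the two hypotheses of the ``if'' direction of Lemma~\ref{lemma:Schur_weakly_maj}: that $\phi$ is strictly decreasing and strictly Schur-convex on $\mathbb{R}_{>0}^n$.

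\emph{Strict Schur-convexity.} Write $\phi(x) = \sum_{i=1}^n g(x_i)$ with $g(t) = -\log t$ on the interval $I = (0,\infty)$. Since $g''(t) = 1/t^2 > 0$ on $I$, $g$ is strictly convex there. Proposition~\ref{prop:strict_Schur} then gives directly that $\phi$ is strictly Schur-convex on $I^n = \mathbb{R}_{>0}^n$.

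\emph{Strict decrease.} By Definition~\ref{def:multi_increasing_1}, we need that $-\phi(x) = \sum \log x_i$ is strictly increasing. Suppose $x \leq y$ componentwise and $x \neq y$. Each term then satisfies $\log x_i \leq \log y_i$, because $\log$ is increasing on $(0,\infty)$. At least one index has $x_j < y_j$, and for that index the inequality is strict because $\log$ is strictly increasing. Summing gives $\sum \log x_i < \sum \log y_i$, so $\phi$ is strictly decreasing.

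With both properties established, Lemma~\ref{lemma:Schur_weakly_maj} applies. Whenever $x \prec^w y$ on $\mathbb{R}_{>0}^n$ and $x$ is not a permutation of $y$, we get $\phi(x) < \phi(y)$, which is the claim.

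There is no real obstacle here. The only points needing care are bookkeeping:
\begin{itemize}
\item The sign convention: we pass to $-\log$ so that the function is convex and decreasing, which matches Lemma~\ref{lemma:Schur_weakly_maj}.
\item The ordering convention in Definition~\ref{def:supermajorization}: it sorts vectors in non-decreasing order. This is harmless because $\phi$ is symmetric, so it is unaffected by reordering $x$ and $y$.
\item The domain: $\mathcal{A}=\mathbb{R}_{>0}^n$ is a product of intervals, as Proposition~\ref{prop:strict_Schur} requires.
\end{itemize}
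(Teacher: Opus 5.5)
Your proposal is correct and matches the paper's proof step for step. You take the same $\phi(z)=-\sum_{i=1}^n\log z_i$ on $\mathbb{R}_{>0}^n$, show it is strictly decreasing by the componentwise argument and strictly Schur-convex via Proposition~\ref{prop:strict_Schur}, and then invoke Lemma~\ref{lemma:Schur_weakly_maj}.
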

\begin{proof}
    Consider the function $\phi:\mathbb{R}^n \to \mathbb{R}$ defined by $\phi(z) = \sum_{i=1}^n -\log(z_i)$. First, we show that $\phi(z)$ is strictly decreasing and strictly Schur-convex on $\mathcal{A} = (0, \infty)^n \subset \mathbb{R}^n$.

    Consider $x, y \in \mathbb{R}^n_{> 0}$, where $x \leq y$ as in Definition \ref{def:multi_increasing_1} and $x \not = y$. We show that $\phi(x)$ is strictly decreasing by showing that $-\phi(x)$ is strictly increasing. For each $i$, because $x_i \leq y_i$, we have $\log(x_i) \leq \log(y_i)$, as $f(x) = \log(x)$ is strictly increasing for all $x \in (0, \infty)$. Because $x \not = y$, there must exist some $j$ such that $x_j < y_j$. For that $j$, using the strictly increasing property of $f(x) = \log(x)$, we have $\log(x_j) < \log(y_j)$. Thus, 
    $$-\phi(x) = \sum_{i=1}^n \log(x_i) < \sum_{i=1}^n \log(y_i)= -\phi(y),$$
    so that $\phi(z)$ is strictly decreasing, as needed.

    Next, $f(x) = - \log(x)$ is strictly convex on $(0, \infty)$, and therefore, by Proposition~\ref{prop:strict_Schur}, $\phi(z) = \sum_{i=1}^n - \log(z_i)$ is strictly Schur-convex on $\mathbb{R}_{>0}^n$.

    Let $\mathcal{A} = \mathbb{R}_{>0}^n$. Because $\phi$ is strictly decreasing and strictly Schur-convex on $\mathcal{A}$, if $x \prec^w y$ and $x$ is not a permutation of $y$, then by Lemma~\ref{lemma:Schur_weakly_maj},
   $$\sum_{i=1}^n -\log(x_i) < \sum_{i=1}^n -\log(y_i),$$
    completing the proof.
\end{proof}

\noindent With the necessary results from majorization theory, we proceed to results from information theory. In particular, we begin with the Huffman algorithm.

\section{The Huffman algorithm and Huffman trees}
\label{app:huffman}

\subsection{The Huffman algorithm}

Our next step in obtaining the maximally probable $r$-furcating unlabeled topology is to borrow results concerning Huffman trees in information theory. Following \cite{Glassey76}, let $\{v_1, v_2, \ldots, v_w\} \cup \{\ell_1, \ell_2, \ldots, \ell_n\}$ be the node set of a rooted tree $T$, where the $v_i$ are internal nodes, with $v_w$ as the root, and the $\ell_i$ are leaves. Let $\vec{c} = (c_1, c_2, \ldots, c_n)$ be a fixed nonnegative real $n$-vector of weights, where weight $c_i$ corresponds to leaf $\ell_i$. 

In the \emph{Huffman algorithm}~\cite[Section 5.6, p.~92]{Cover05}, given leaves $\{\ell_1, \ell_2, \ldots, \ell_n\}$ with associated weights $(c_1, c_2, \ldots, c_n)$, for fixed $r \geq 2$, a tree is constructed by a greedy algorithm that successively merges the $r$ nodes of minimal weight. Each newly constructed internal node has a weight equal to the sum of the weights of its children. If multiple minimal-weight sets of $r$ nodes exist, then one is chosen arbitrarily. 

\begin{figure}
    \centering
    \includegraphics[width=0.73\linewidth]{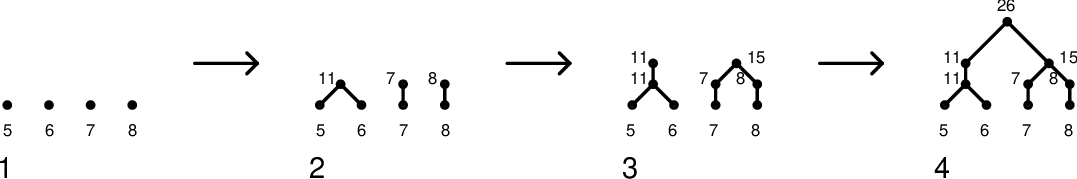}
    \vspace{-.25cm}
    \caption{The construction of the bifurcating $H$-tree for weight vector $\sigma=(5, 6, 7, 8)$. In panel 1, the leaves of weights $5$ and $6$ are merged to produce an internal node of weight 11. We are left to choose among nodes with weights $(7, 8, 11)$. In panel 2, the leaves of weights 7 and 8 are merged to produce an internal node of weight 15. Finally, in panel 3, the two internal nodes of weight 11 and 15 are merged to produce the root, with weight 26. The $H$-tree appears in panel 4. In the notation of the merge operator, $M_2(\sigma) = (7, 8, 11)$, $M^2_2(\sigma) = (11, 15)$, and $M^3_2(\sigma) = (26)$, and the weight sequence is $(11,15,26)$.}
    \label{fig:Huff_construct}
\end{figure}

A tree constructed by this algorithm is an \emph{$H$-tree}; an example appears in Figure \ref{fig:Huff_construct}. Because of the arbitrary choices that can be made at stages at which multiple sets tie for the minimal weight, the tree built by the Huffman algorithm is not necessarily unique. For example, Figure \ref{fig:distinct_trees_same_weight} shows two distinct unlabeled topologies constructed by the Huffman algorithm with a shared weight vector. In the context of finding maximally probable $r$-furcating unlabeled topologies on $n$ leaves, we investigate $r$-furcating $H$-trees with the weight vector $\vec{1}$. In this setting, the weight of an internal node, $v$, is its number of descendant leaves, $m(v)$.

First, we show that the $r$-furcating $H$-tree on $n$ leaves with the weight vector $\vec{1}$ is unique (Lemma~\ref{lemma:unique}), and we characterize its shape (Proposition~\ref{prop:h_tree_shape}). In Section \ref{app:completing}, we show that this unique $r$-furcating $H$-tree is the desired maximally probable $r$-furcating unlabeled topology on $n$ leaves.

\begin{figure}
    \centering
    \includegraphics[width=0.7\linewidth]{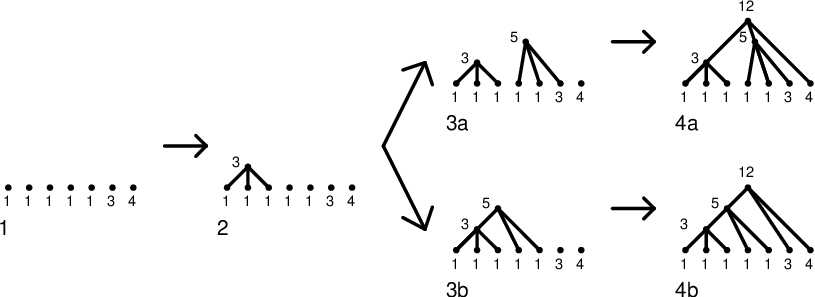}
    \vspace{-.25cm}
    \caption{Two distinct unlabeled topologies for trifurcating $H$-trees with weight vector $\sigma=(1, 1, 1, 1, 1, 3, 4)$. In panel 1, three nodes of weight 1 are merged to produce a node of weight 3. In panel 2, either the leaves of weights 1, 1, and 3 can be selected to produce the tree seen in panel 3a, or the newly produced node of weight 3 and the two leaves of weight 1 can be selected to produce the tree in panel 3b. The $H$-trees appear in panels 4a and 4b. In either case, $M_3(\sigma)=(1,1,3,3,4)$, $M_3^2(\sigma) = (3, 4, 5)$, and $M_3^3(\sigma)=(12)$, and the weight sequence is $(3, 5, 12)$.}
    \label{fig:distinct_trees_same_weight}
\end{figure}

To show that the $r$-furcating $H$-tree on $n$ leaves is unique, we first define the \emph{$r$-merge operator}, $M_r$, which generalizes the binary merge operator in Section 4 of \cite{Glassey76}. Let $\Sigma$ be the set of finite non-decreasing sequences of positive integers. $M_r: \Sigma \to \Sigma$ operates on a sequence $\sigma \in \Sigma$ of length $r$ or greater by deleting the first $r$ elements of $\sigma$ and inserting their sum into the resulting sequence at the appropriate place. As an example, if $r=3$ and $\sigma = (1, 1, 3, 4, 6, 7)$, then $M_3(\sigma) = (4, 5, 6, 7)$. The three smallest elements in $\sigma$ are $1, 1, 3$, and their sum of 5 is placed among the remaining elements so that the sequence remains non-decreasing.

The notation $M_r^i(\sigma)$ represents the application of $i \geq 0$ successive $r$ merge operations on sequence $\sigma$, where for convenience $M_r^1(\sigma)$ is usually written $M_r(\sigma)$, and $M_r^0(\sigma)=\sigma$. Beginning with a sequence of length $n = w(r-1) + 1$, $w \geq 1$, sequence $M_r^w(\sigma)$ has length $1$, as each merge operation reduces the sequence length by $r-1$. The operation preserves sums of elements: the elements of $M_r^i(\sigma)$ and $\sigma$ have the same sums.

In the context of the Huffman algorithm, $\sigma$ represents the weight vector, the list of weights of the starting sequence. An application of the $r$-merge operator, $M_r(\sigma)$, corresponds to a step of the Huffman algorithm: the $r$ nodes of smallest weight are merged into a new node, which has a weight corresponding to the sum of these $r$ smallest weights (Figure \ref{fig:Huff_construct}). 

\subsection{Normal sequences}

Following \cite{Glassey76}, we now define a \emph{normal sequence} in the setting of $r$-furcation.
\begin{defi} \label{def:normal_seq}
    A finite non-decreasing sequence of positive integers $\sigma$ is \emph{normal} if there exists a nonnegative integer $k$ such that:
\begin{enumerate}
    \item Each element of $\sigma$ lies in $[r^k, r^{k+1}]$, and 
    \item At most one element of $\sigma$ is not a power of $r$ (where $1=r^0$ is regarded as a power of $r$).
\end{enumerate}
\end{defi}
Crucially, if $\sigma$ is a normal sequence and $k$ is its associated nonnegative integer, then every element of $\sigma$ has one of three possible values: $r^k, r^{k+1}, r^k + b$, where $0 < b < r^{k+1} - r^k$, and at most one element is $r^k+b$. 

We now generalize Lemma 3 of \cite{Glassey76} about properties of normal sequences to $r$-furcation.
\begin{lemma} 
\label{lemma:merge_op}
The following properties hold concerning normal sequences: 
    \begin{enumerate}
        \item For each $n \geq 1$, the sequence consisting of $n$ $1'$s is normal.
        \item If $\sigma$ is normal, then $M_r(\sigma)$ is normal.
        \item If $\sigma = (\sigma_1, \sigma_2, \ldots, \sigma_r, \ldots)$ is normal, then at least $r-1$ of the first $r$ elements $\sigma_1, \sigma_2, \ldots, \sigma_r$ equal powers of $r$, and $\sigma_1 / \sigma_r \geq \frac{1}{r}$.
    \end{enumerate}
\end{lemma}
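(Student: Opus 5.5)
The plan is to prove the three parts in order. The main tool is the explicit description of a normal sequence noted after Definition~\ref{def:normal_seq}. With $k$ the associated integer, a normal $\sigma$ consists, in non-decreasing order, of some number $a \geq 0$ of copies of $r^k$. These are followed by at most one ``odd'' element $r^k+b$ with $0<b<r^{k+1}-r^k$, and then by copies of $r^{k+1}$. Part~2 concerns $M_r$, so there we assume $\sigma$ has length at least $r$.

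Part~1 is immediate: take $k=0$. Every element equals $1=r^0 \in [r^0,r^1]$, and no element fails to be a power of $r$.

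Part~3 also follows directly from the structure. Among $\sigma_1,\ldots,\sigma_r$ at most one element is not a power of $r$, since at most one element of all of $\sigma$ is not. Every element lies in $[r^k,r^{k+1}]$, so $\sigma_1 \geq r^k$ and $\sigma_r \leq r^{k+1}$, giving $\sigma_1/\sigma_r \geq 1/r$.

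Part~2 is the substantive step, and I would split it according to $a$.

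\emph{Case $a \geq r$.} The first $r$ elements all equal $r^k$, so their sum is $r^{k+1}$, which is a power of $r$ in $[r^k,r^{k+1}]$. The remaining elements are unchanged. Hence $M_r(\sigma)$ is normal with the same $k$ and the same (at most one) non-power element.

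\emph{Case $a \leq r-1$.} Because of the ordering, the first $r$ elements contain all $a$ copies of $r^k$. If the odd element exists, it sits at position $a+1 \leq r$, so it is also among the first $r$. Consequently every element deleted from $\sigma$ \emph{after} the first $r$ equals $r^{k+1}$. The merged sum is
\[
s = a r^k + \epsilon (r^k + b) + (r-a-\epsilon) r^{k+1},
\]
where $\epsilon\in\{0,1\}$ indicates whether the odd element exists. I then check that $s \in [r^{k+1}, r^{k+2}]$. For the lower bound, each of the $r$ summands is at least $r^k$, and a sum of $r$ such terms is at least $r^{k+1}$. For the upper bound, each summand is at most $r^{k+1}$, so the sum is at most $r\cdot r^{k+1}=r^{k+2}$. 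Therefore $M_r(\sigma)$ is normal with $k$ replaced by $k+1$. All of its elements other than $s$ equal $r^{k+1}$, so $s$ is the only possible non-power of $r$.

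I expect the main obstacle to be the bookkeeping in the case $a \leq r-1$. One must ensure that the odd element cannot survive the merge alongside a new non-power sum, which would give two non-powers. The ordering argument above handles this: whenever fewer than $r$ copies of $r^k$ are present, the odd element is necessarily consumed in the merge. Two boundary situations should be noted explicitly. If $s$ happens to be a power of $r$, for instance $s=r^{k+2}$ when $a=0$ and there is no odd element, the sequence is still normal. If no elements remain after the merge, the resulting one-element sequence is trivially normal.
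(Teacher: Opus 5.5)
Your proof is correct and follows essentially the same route as the paper. Part 2 uses the same case split, according to whether the first $r$ entries are all $r^k$, and the merged sum lands in $[r^{k+1},r^{k+2}]$ while absorbing the lone non-power when it exists; your non-strict bounds are all that normality requires, and your direct estimate $\sigma_1 \geq r^k$, $\sigma_r \leq r^{k+1}$ in part 3 is slightly cleaner than the paper's case list, which implicitly assumes $\sigma_1 = r^k$.
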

\begin{proof}
    Let $\sigma$ be a normal sequence of length $n$.
    \begin{enumerate}
        \item This result holds by Definition \ref{def:normal_seq}, as each element of the sequence of $n$ $1$'s is $r^0$, a power of $r$.
        \item By Definition \ref{def:normal_seq}, there exists $k \geq 0$ such that each $\sigma_i$ has one of three possible values: $r^k, r^{k+1}, r^k + b$, where $0 < b < r^{k+1} - r^k$. Further, there exists $\ell$, $1 \leq \ell \leq n$, such that $\sigma_i = r^k$ for $i \in \{1, 2, \ldots, \ell-1\}$, $\sigma_i = r^{k+1}$ for $i \in \{\ell+1, \ell+2, \ldots, n\}$, and $\sigma_\ell=r^k + b$, where $0 \leq b < r^{k+1} - r^k$. We have two cases:
        \begin{enumerate}
            \item Suppose $\ell > r$ or $\ell = r$ with $b=0$. Either way, $\sigma_i = r^k$ for $i \in \{1, 2, \ldots, r \}$. We then have $\sum_{i=1}^r \sigma_i = r^{k+1}$, which is a power of $r$. $M_r(\sigma)$ has at most one element that is not a power of $r$ because the merge operation replaces $r$ powers of $r$ (all $r^k$) with a different power of $r$ ($r^{k+1}$). In addition, all elements of $M_r(\sigma)$ lie in $[r^k, r^{k+1}]$ because the unmerged elements were in this range to start, and the new element $r^{k+1}$ is also in this range.
            \item Suppose $\ell < r$ or $\ell = r$ with $b \not = 0$. Either way, $\sigma_r > r^k$. With $\sum_{i=1}^r \sigma_i = \ell r^{k} + (r - \ell) r^{k+1} + b$, it is clear that $r^{k+1} < \sum_{i=1}^r \sigma_i < r^{k+2}$: for the left-hand side, each of $r$ terms in $\sum_{i=1}^r \sigma_i$ is at least $r^k$, and at least one, the $r^k+b$ term, strictly exceeds $r^k$; For the right-hand side,
            \begin{align*}
             \sum_{i=1}^r \sigma_i & < \ell r^k + (r-\ell)r^{k+1} + (r^{k+1} - r^k) = r^{k+2} - (r-1)(\ell-1)r^k \leq r^{k+2}.
            \end{align*}
            The unmerged elements all equal $r^{k+1}$, so all elements of $M_r(\sigma)$ lie in $[r^{k+1}, r^{k+2}]$. Note that $\sigma_\ell$ is the only element of $\sigma$ that is possibly not a power of $r$. Because $\sigma_\ell$ is used in the merging, it is incorporated in the new element of $M_r(\sigma)$, so at most one element of $M_r(\sigma)$ is not a power of $r$.
        \end{enumerate}
        \item By property 2 of Definition \ref{def:normal_seq}, at most one element of $\sigma$ is not a power of $r$. Hence, for the subset of elements, $\sigma_1, \sigma_2, \ldots, \sigma_r$, at most one element is not a power of $r$, and at least $r-1$ elements are equal powers of $r$. For the second part of the statement, we have three cases, in all of which $\sigma_1/\sigma_r \geq \frac{1}{r}$:
        \begin{enumerate}
            \item $(\sigma_1, \sigma_r) = (r^k, r^k)$. Then $\sigma_1/\sigma_r = 1 \geq \frac{1}{r}$.
            \item $(\sigma_1, \sigma_r) = (r^k, r^{k+1})$. Then $\sigma_1/\sigma_r = \frac{1}{r} \geq \frac{1}{r}$. 
            \item $(\sigma_1, \sigma_r) = (r^k, r^k + b)$, $0 < b < r^{k+1}-r^k$. Then $\sigma_1/\sigma_r > r^k / r^{k+1} = 1/r$.
        \end{enumerate}
    \end{enumerate}
\end{proof}

An important application of properties 1 and 2 of Lemma~\ref{lemma:merge_op} that we use in the remaining proofs is that $M_r^i(\vec{1})$ is a normal sequence. 

\subsection{Uniqueness of the $H$-tree for a uniform weight vector}

We now study the properties of the $r$-merge operator and the trees it produces, toward proving the uniqueness of the $H$-tree for a uniform weight vector. We begin by showing that if a normal sequence $M_r^i(\vec{1})$ has a term $r^{k+1}$, then that term must have been produced by merging $r$ copies of $r^{k}$. First, we state a lemma from \cite{Dickey25}.

\begin{lemma} [\cite{Dickey25}, Lemma 10]
\label{lem:10decomp}
Choose positive integers $q,r \geq 2$. Let $k_0=0$ and $k_p = (q-1)r^{p-1}$ for integers $p \geq 1$, and let $j_p = k_{p+1} - k_p$ for $p \geq 0$. Each integer $n \geq q$ has a unique decomposition specified by integers $(p,s,b)$ as $n=s k_p + (r-s)k_{p+1} + b$, where $1 \leq s \leq r$, $p \geq 0$, and $0 \leq b < j_p$. 
\end{lemma}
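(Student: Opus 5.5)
The plan is to prove existence and uniqueness at once by showing that the half-open integer intervals indexed by admissible pairs $(p,s)$ tile the nonnegative integers without overlap. For $p \geq 0$ and $1 \leq s \leq r$, set $f(p,s) = s k_p + (r-s)k_{p+1}$ and $I_{p,s} = [f(p,s), f(p,s)+j_p)$. A triple $(p,s,b)$ with $0 \leq b < j_p$ represents $n$ exactly when $n \in I_{p,s}$, with $b = n - f(p,s)$. So the lemma reduces to showing that every integer $n \geq q$ lies in exactly one interval $I_{p,s}$; then $b$ is determined by $n$ and $(p,s)$.

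First I record the explicit values. We have $j_0 = k_1 = q-1$, and for $p \geq 1$, $j_p = (q-1)r^{p-1}(r-1)$. In every case $j_p > 0$ because $q \geq 2$, so each interval $I_{p,s}$ is nonempty.

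Next, fix $p$. Since $f(p,s-1) - f(p,s) = k_{p+1} - k_p = j_p$, the intervals $I_{p,r}, I_{p,r-1}, \ldots, I_{p,1}$ sit consecutively, each beginning where the previous one ends. Together they cover exactly $[f(p,r), f(p,1)+j_p)$. Two identities then finish the argument:
\begin{align*}
f(p,1) + j_p &= k_p + (r-1)k_{p+1} + k_{p+1} - k_p = r k_{p+1} = f(p+1,r),\\
f(0,r) &= r k_0 = 0.
\end{align*}
Here $f(p+1,r) = r k_{p+1}$ by definition. The first identity says that the block for $p$ ends exactly where the block for $p+1$ begins. The second says the very first block starts at $0$. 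Because $k_p \to \infty$, the blocks exhaust all nonnegative integers, so the intervals $I_{p,s}$ partition $\mathbb{Z}_{\geq 0}$ in the lexicographic order ($p$ increasing, $s$ decreasing).

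Hence every $n \geq 0$, and in particular every $n \geq q$, has a unique decomposition. The only triple that the hypothesis $n \geq q$ excludes is $(p,s) = (0,r)$, whose interval is $[0, q-1)$, and the lemma does not need it.

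The main thing to watch is the boundary case $p = 0$, where $k_0 = 0$ does not follow the pattern $k_p = (q-1)r^{p-1}$. So I will check $j_0 = q-1$ and the identity $f(0,1) + j_0 = r(q-1) = f(1,r)$ directly rather than through the general formula. Beyond that, the argument is routine telescoping bookkeeping.
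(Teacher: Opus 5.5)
Your proof is correct. The identities $f(p,s-1)-f(p,s)=j_p>0$, $f(p,1)+j_p=rk_{p+1}=f(p+1,r)$ and $f(0,r)=0$, together with $k_p\to\infty$, show that the intervals $I_{p,s}$ partition $\mathbb{Z}_{\geq 0}$. That gives existence and uniqueness of $(p,s,b)$ at once.

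The paper does not prove this lemma itself. It imports it from \cite{Dickey25} and records only that the argument there exhibits the decomposition through explicit formulas: for $q=2$, $p=\lfloor\log_r n\rfloor$, a closed form for $s$, and then $b$. Your partition argument is more structural, and it buys several things:
\begin{itemize}
\item It uses only $k_0=0$, $k_{p+1}>k_p$, and $k_p\to\infty$. The identity $f(p,1)+j_p=rk_{p+1}$ is pure algebra from $j_p=k_{p+1}-k_p$, so the $p=0$ case needs no separate check.
\item The hypothesis $n\geq q$ is not needed at all.
\item The explicit formulas come out as a byproduct. Here $p$ is fixed by $rk_p\leq n<rk_{p+1}$, which for $q=2$ is $p=\lfloor\log_r n\rfloor$. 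Because $f(p,s)=rk_{p+1}-sj_p$, you get $rk_{p+1}-n=sj_p-b\in((s-1)j_p,\,sj_p]$, hence $s=\lceil (rk_{p+1}-n)/j_p\rceil$.
\end{itemize}

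That last formula is a ceiling. The formula the paper quotes, and reuses in Proposition~\ref{prop:h_tree_shape} and Theorem~\ref{thm:r_furcating_max_shape}, has a floor, and the two agree only when $b=0$. For example, with $r=2$ and $n=5$ the floor gives $s=1$, $b=-1$. Your tiling gives $(p,s,b)=(2,2,1)$, consistent with $U_5^*=U_2^*\oplus U_3^*$.

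One small slip in your write-up: when $q=2$, the interval $I_{0,r-1}=\{1\}$ is also excluded by $n\geq q$, so $(0,r)$ is not the only excluded pair. This is harmless, since you prove the decomposition for every $n\geq 0$.
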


In the proof of this lemma, taking $q=2$, it is shown in \cite{Dickey25} that given $n$, the decomposition has $p = \lfloor \log_r n \rfloor$, $s = \lfloor (r^{p+1} - n)/(r^p - r^{p-1}) \rfloor$, and $b = n - [sr^{p-1} + (r-s)r^p]$. With this lemma and unique decomposition, we can now prove that an $r^{k+1}$ term in a normal sequence must arise from the merging of $r$ copies of $r^k$.

\begin{lemma} \label{lemma:r_k_only}
    Fix $r \geq 2$, and let $n = w(r-1) + 1$ for a nonnegative integer $w$. Let $\vec{1}$ be an $n$-vector, and let $\sigma = M_r^i(\vec{1})$ for some $i \geq 1$. For $0 \leq k \leq \lfloor \log_r n \rfloor - 1$, if $M_r^i(\sigma)$ contains a term of the form $r^{k+1}$, then it was produced by merging $r$ copies of $r^{k}$.
\end{lemma}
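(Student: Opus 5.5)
We read the statement as follows: every entry equal to $r^{k+1}$ in a sequence obtained from $\vec{1}$ by repeated $r$-merges was created by merging $r$ copies of $r^{k}$. Here $k \geq 0$, and the upper bound on $k$ only ensures that such a term can occur at all. The plan is to track where each entry comes from and then use the case analysis in the proof of property 2 of Lemma~\ref{lemma:merge_op}. The decomposition of Lemma~\ref{lem:10decomp} should not be needed.

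First, the provenance of entries. Every entry of $M_r^j(\vec{1})$ is either an original $1$ or the sum created at some earlier merge step $j' \leq j$. An entry, once created, is never changed until it is itself merged away. Because $k \geq 0$, we have $r^{k+1} \geq r > 1$, so an entry equal to $r^{k+1}$ is not an original leaf. It therefore equals $\sum_{i=1}^r \tau_i$, where $\tau = M_r^{j'-1}(\vec{1})$ for some $j' \geq 1$. By properties 1 and 2 of Lemma~\ref{lemma:merge_op}, $\tau$ is normal, with some associated exponent $k'$.

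Second, we show the sum of the first $r$ entries of $\tau$ is a power of $r$ only in one case. Write $\tau$ as in the proof of property 2: $\ell-1$ entries equal $r^{k'}$, then $\tau_\ell = r^{k'}+b$ with $0 \leq b < r^{k'+1}-r^{k'}$, then entries equal to $r^{k'+1}$.
\begin{enumerate}
\item In case (a) there ($\ell > r$, or $\ell = r$ with $b = 0$), the first $r$ entries are all $r^{k'}$ and the sum is $r^{k'+1}$.
\item Otherwise we are in case (b), and the sum lies strictly between $r^{k'+1}$ and $r^{k'+2}$, so it is not a power of $r$.
\end{enumerate}
The step needing care is the strictness of the lower bound in case (b) when $b = 0$ and $\ell < r$, which the earlier proof phrases via the $r^{k'}+b$ term. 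Here the sum is $\ell r^{k'} + (r-\ell) r^{k'+1}$ with $1 \leq \ell < r$. It exceeds $r^{k'+1}$ because $r - \ell \geq 1$ and $\ell r^{k'} > 0$. It is below $r \cdot r^{k'+1} = r^{k'+2}$ because $\ell r^{k'} < \ell r^{k'+1}$. The subcase $\ell = r$, $b > 0$ gives the sum $r^{k'+1} + b$ with $0 < b < r^{k'+1}$, which is again strictly inside the interval.

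Hence an entry equal to $r^{k+1}$ must come from case (a), so it is $r \cdot r^{k'} = r^{k'+1}$. Comparing exponents gives $k' = k$, so the entry was produced by merging $r$ copies of $r^{k}$. One small point to address is that the exponent of a normal sequence need not be unique, for instance when all entries equal $r^{k'+1}$. This causes no problem, because the conclusion depends only on the value of the merged sum and the entries actually merged, not on which admissible exponent is chosen.
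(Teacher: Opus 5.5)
Your proof is correct, but it takes a different route from the paper's. The paper applies the unique decomposition of Lemma~\ref{lem:10decomp} (with $q=2$) directly to the integer $r^{k+1}$. It observes that $(p,s,b)=(k+1,r,0)$ is that decomposition and concludes by uniqueness that the merged entries must be $r$ copies of $r^k$. Left implicit there is the fact that the $r$ smallest entries of any normal sequence always have exactly the shape of such a decomposition: $s-1$ copies of $r^{p-1}$, one $r^{p-1}+b$, and $r-s$ copies of $r^p$. That fact is what lets uniqueness identify the merged multiset. You bypass Lemma~\ref{lem:10decomp} and instead reuse the two cases from the proof of property 2 of Lemma~\ref{lemma:merge_op}. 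You show that in case (b) the merged sum lies strictly between the consecutive powers $r^{k'+1}$ and $r^{k'+2}$, so only case (a) can produce a power of $r$. Your version is self-contained. It also makes explicit the provenance step: an entry $r^{k+1}>1$ must be the sum of the first $r$ entries of an earlier normal sequence. And it repairs a small slip in the proof of property 2, where strictness of the lower bound is attributed to the $r^k+b$ term even when $b=0$ and $\ell<r$. The paper's route says more in general, since uniqueness pins down the merged multiset for any sum and not just for powers of $r$. It also connects to the decomposition reused in Proposition~\ref{prop:h_tree_shape}. For this lemma, however, only the power-of-$r$ case is needed. Your remark on the non-uniqueness of the exponent can be made precise by taking $k'=\lfloor \log_r \tau_1\rfloor$, i.e.\ $r^{k'}\leq \tau_1<r^{k'+1}$; this choice always yields the representation with $\ell$ and $0\leq b<r^{k'+1}-r^{k'}$ that you use.
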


\begin{proof}
    Recall by property 1 of Lemma~\ref{lemma:merge_op} that $\vec{1}$ is a normal sequence. By property 2 of Lemma~\ref{lemma:merge_op}, sequence $\sigma = M_r^i(\vec{1})$ is also normal. Because $\sigma$ is normal, using Definition \ref{def:normal_seq}, a term of the form $r^{k+1}$ can only be produced by merging terms of the form $r^p$, $r^{p+1}$, and $r^p + b$, where $0 \leq b < r^{p+1} - r^p$. For $q = 2$ and our fixed $r$, Lemma~\ref{lem:10decomp} states that $r^{k+1}$ has a unique decomposition as
    $$r^{k+1} = s \cdot r^{p-1} + (r-s) \cdot r^p + b,$$
    where $1 \leq s \leq r$, $p \geq 0$, and $0 \leq b < r^{p+1}-r^p$. Take $(p,s,b) = (k+1,r,0)$; the decomposition 
    $$r^{k+1} = r \cdot r^{k} + (r-r) \cdot r^{k+1} + 0,$$
    must be unique. Hence, the only way to produce an $r^{k+1}$ term is by merging $r$ copies of $r^k$, as needed.
\end{proof}

We now connect the $r$-merge operator and its properties to the construction of trees and to their shapes. A fully symmetric $r$-furcating tree is an $r$-furcating tree in which, for each internal node, all $r$ subtrees have the same unlabeled shape. 

\begin{lemma} \label{lemma:powers_are_sym}
    Fix $r \geq 2$, and let $n = w(r-1) + 1$ for a nonnegative integer $w$. Let $S$ be an $r$-furcating $H$-tree on $n$ leaves constructed from the weight vector, $\vec{1}$, according to the Huffman algorithm. For each $v \in V^0(S)$ such that $m(v) = r^k$, $1 \leq k \leq \lfloor \log_r n \rfloor$, the tree rooted at $v$ is the fully symmetric $r$-furcating tree.
\end{lemma}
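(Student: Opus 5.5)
We argue by induction on $k$, using Lemma~\ref{lemma:r_k_only} as the engine. Each internal node $v$ of $S$ is created at some step $i$ of the Huffman algorithm. Its weight $m(v)$ is the element inserted into $M_r^i(\vec{1})$, and its $r$ children are the nodes whose weights were the first $r$ elements of $M_r^{i-1}(\vec{1})$. So the claim reduces to a statement about which weights can be merged to produce a weight $r^k$, together with a recursive description of the subtrees rooted at the children.

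\textbf{Base case $k=1$.} Let $v$ be internal with $m(v)=r$. Its $r$ children have positive integer weights summing to $r$, so each child has weight $1$ and is therefore a leaf of $S$. (Equivalently, apply Lemma~\ref{lemma:r_k_only} with $k=0$.) Hence $v$ is a broomstick node, which is the fully symmetric $r$-furcating tree with $r$ leaves.

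\textbf{Inductive step.} Suppose the statement holds for weight $r^{k}$, where $1\leq k\leq \lfloor \log_r n\rfloor-1$. Let $v\in V^0(S)$ with $m(v)=r^{k+1}$, created at step $i$. Then $r^{k+1}$ is a term of the normal sequence $M_r^i(\vec{1})$, by Lemma~\ref{lemma:merge_op}. By Lemma~\ref{lemma:r_k_only}, it arose by merging $r$ elements each equal to $r^k$, so each of the $r$ children $u_1,\ldots,u_r$ of $v$ has $m(u_j)=r^k$. Since $k\geq 1$, each $u_j$ has weight greater than $1$ and is therefore an internal node. By the inductive hypothesis, each subtree rooted at $u_j$ is the fully symmetric $r$-furcating tree on $r^k$ leaves. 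This tree is unique as an unlabeled shape: a simple induction shows that there is exactly one fully symmetric $r$-furcating tree with $r^k$ leaves, namely $r\,F_{k-1}$ in the $\oplus$ notation, where $F_0$ is a single leaf. So all $r$ subtrees of $v$ share the same shape, and the tree at $v$ is $r F_k = F_{k+1}$. Because the subtrees are themselves fully symmetric, $F_{k+1}$ is fully symmetric as well. The bound $k\leq\lfloor\log_r n\rfloor$ simply records that no weight can exceed $n$.

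\textbf{Main obstacle.} The key step is the application of Lemma~\ref{lemma:r_k_only}: we must rule out a term $r^{k+1}$ arising from a merge that includes the single non-power element or a mixture of $r^{k}$ and $r^{k+1}$ terms. The lemma's indices are stated slightly loosely, so I would verify this directly from normality as a fallback. Suppose the merged block lies in $[r^p,r^{p+1}]$ and sums to $r^{k+1}$. If all entries equal $r^p$, the sum is $r^{p+1}$, forcing $p=k$, which is the desired case. Otherwise, the case analysis in the proof of property 2 of Lemma~\ref{lemma:merge_op} shows that the sum lies strictly between $r^{p+1}$ and $r^{p+2}$, so it cannot be a power of $r$. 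This confirms that the children all have weight exactly $r^k$, and the rest of the argument is routine.
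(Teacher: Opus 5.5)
Your proof is correct and follows the paper's argument: induction on $k$, with Lemma~\ref{lemma:r_k_only} forcing the $r$ children of a weight-$r^{k+1}$ node to have weight $r^k$, after which the inductive hypothesis applies. Your additions only make explicit what the paper leaves implicit. These are the uniqueness of the fully symmetric shape and the direct check from normality. In that check, choose $p$ so that the smallest merged entry is below $r^{p+1}$, so that a block of $r$ copies of $r^{p+1}$ falls under your first case rather than the ``otherwise'' case.
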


\begin{proof}
    We proceed by induction on $k$. In the base case of $k=1$, with the weight vector $\vec{1}$, a node $v$ with weight $r^1$ has $r$ children, each of weight $1$. Thus, $v$ is a (fully symmetric) broomstick node with $r$ leaves.

    For the inductive hypothesis, suppose that for all $k$, $1 \leq k \leq k_* < \lfloor \log_r n \rfloor$, trees that are rooted at nodes $v$ for which $m(v)=r^k$ are fully symmetric $r$-furcating trees. Consider a node $v$ for which $m(v)=r^{k_*+1}$. By Lemma~\ref{lemma:r_k_only}, terms with value $r^{k_*+1}$ in a weight vector obtained in the construction of an $H$-tree can only be produced by merging $r$ copies of $r^{k_*}$, each, by the inductive hypothesis, representing a fully symmetric $r$-furcating tree. Hence, merging $r$ values $r^{k_*}$ produces a fully symmetric $r$-furcating tree on $r^{k_*+1}$ leaves.
\end{proof}

\begin{corollary} \label{cor:perf_power_H_tree}
    For $r \geq 2$, the unique $r$-furcating $H$-tree on $n=r^k$ leaves, $k \geq 1$, constructed by the Huffman algorithm from the weight vector $\vec{1}$, is the fully symmetric $r$-furcating tree on $n$ leaves.
\end{corollary}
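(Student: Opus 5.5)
The plan is to read the corollary off Lemma~\ref{lemma:powers_are_sym} applied at the root. Let $n=r^k$ with $k\geq 1$. Then $n-1=r^k-1=(r-1)(r^{k-1}+\cdots+1)$, so $n=w(r-1)+1$ for an integer $w\geq 1$ and the Huffman algorithm on $\vec{1}$ is well defined. Let $S$ be any $r$-furcating $H$-tree produced from $\vec{1}$; we make no assumption about how ties were broken.

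First, the root $v_w$ of $S$ has weight $m(v_w)=n=r^k$, since merging preserves the total sum of weights. Second, $\lfloor\log_r n\rfloor=k$, so the exponent $k$ lies in the admissible range $1\leq k\leq\lfloor\log_r n\rfloor$ of Lemma~\ref{lemma:powers_are_sym}. That lemma applies to every internal node of $S$ whose weight is a power of $r$ in this range. Applied to $v=v_w$, it says the subtree rooted at the root, which is $S$ itself, is the fully symmetric $r$-furcating tree on $r^k$ leaves.

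Uniqueness then follows because the conclusion holds for every $H$-tree $S$, whatever the tie-breaking choices. The fully symmetric $r$-furcating tree on $r^k$ leaves is a single unlabeled shape: by induction on $k$, it is $r$ copies of the fully symmetric tree on $r^{k-1}$ leaves joined at a root. Hence every run of the Huffman algorithm yields the same unlabeled topology, and the $H$-tree is unique as an unlabeled topology.

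The only point needing care is that Lemma~\ref{lemma:powers_are_sym} legitimately reaches the top exponent $k=\lfloor\log_r n\rfloor$. In its induction, the step to $k_*+1$ uses Lemma~\ref{lemma:r_k_only} with exponent index $k_*\leq\lfloor\log_r n\rfloor-1$, which is within that lemma's stated range. So the root case is covered, and no further calculation is required.
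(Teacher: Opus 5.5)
Your proof is correct and matches the paper's argument: the root of any $H$-tree built from $\vec{1}$ on $n=r^k$ leaves has weight $r^k$ with $k=\lfloor\log_r n\rfloor$, so Lemma~\ref{lemma:powers_are_sym} applied at the root makes the whole tree fully symmetric. Your extra checks are sound elaborations of what the paper leaves implicit: that $r^k=w(r-1)+1$, that the top exponent lies within the lemma's range, and that the conclusion holds under every tie-breaking choice, which gives the uniqueness.
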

\begin{proof}
    Let $T$ be an $r$-furcating $H$-tree on $n=r^k$ leaves. The root, $v \in V^0(T)$, has $m(v) = r^k$. By Lemma~\ref{lemma:powers_are_sym}, the tree rooted at $v$ is the fully symmetric $r$-furcating tree, so that $T$ is this tree as well.
\end{proof}
Now, we show that although Huffman trees are not necessarily unique when starting with an arbitrary weight vector, with the weight vector $\vec{1}$, the $H$-tree on $n = w(r-1)+1$ leaves, $w \geq 1$, is unique.

\begin{lemma} \label{lemma:unique}
Fix $r \geq 2$, and let $n=w(r-1)+1$ for a nonnegative integer $w$. For a uniform weight vector on $n$ leaves, there exists a unique $r$-furcating $H$-tree shape.
\end{lemma}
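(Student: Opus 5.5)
We argue by induction on the number of merge steps: at every stage of the Huffman algorithm run on $\vec{1}$, the multiset of current nodes, each viewed as an unlabeled rooted subtree together with its weight, is uniquely determined. The weight sequence $M_r^i(\vec{1})$ is deterministic, since the $r$-merge operator always removes the $r$ smallest values. Any ambiguity in the tree therefore comes only from ties: several nodes share the $r$-th smallest weight but carry different subtree shapes, and the algorithm chooses arbitrarily among them. It suffices to show that whenever two current nodes have equal weight, they carry the same unlabeled subtree shape. Then every admissible choice of the $r$ nodes to merge yields the same multiset of shapes, the new node's shape is determined, and induction over the $w$ merges gives a unique final shape.

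To establish the key claim, I will use normality. By properties 1 and 2 of Lemma~\ref{lemma:merge_op}, every $M_r^i(\vec{1})$ is normal, so its entries take only the values $r^k$, $r^{k+1}$, and at most one value $r^k+b$ with $0<b<r^{k+1}-r^k$. A weight of the form $r^k+b$ occurs at most once, so no tie involves it. The remaining weights are powers of $r$. A node of weight $1=r^0$ is a leaf. A node of weight $r^j$ with $j\geq 1$ is, by Lemma~\ref{lemma:powers_are_sym} (resting on Lemma~\ref{lemma:r_k_only}), the root of the fully symmetric $r$-furcating tree on $r^j$ leaves. Hence all current nodes of a common weight have identical shapes, which is exactly the claim.

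The argument needs one subtlety: Lemma~\ref{lemma:powers_are_sym} is stated for an $H$-tree $S$, whereas I apply it to partial constructions. Its proof, though, only inspects how a weight-$r^{k}$ node was formed inside $M_r^i(\vec{1})$, so it applies verbatim at any intermediate stage. I expect the main obstacle to be the range restriction $k\le\lfloor\log_r n\rfloor$ in Lemma~\ref{lemma:r_k_only}, because its proof via Lemma~\ref{lem:10decomp} is somewhat indirect. If that step is shaky, I will argue directly from normality instead. Suppose a merge produces $r^{k'+1}$ from the normal sequence with parameter $k$. Either all $r$ merged elements equal $r^k$, or the sum lies strictly between $r^{k+1}$ and $r^{k+2}$ as in case (b) of property 2 of Lemma~\ref{lemma:merge_op}, and such a sum cannot be a power of $r$. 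So powers of $r$ are produced only from $r$ copies of the next lower power, and the induction on the exponent then gives full symmetry.

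Finally, I will note that at the last step the root's shape is determined by the multiset of its $r$ children's shapes, so the unique multiset at each stage yields a unique final unlabeled topology. This proves Lemma~\ref{lemma:unique}, with the trivial case $w=0$ (a single leaf) handled separately.
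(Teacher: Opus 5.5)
Your proof is correct and follows essentially the same route as the paper. Normality rules out any tie involving the single non-power-of-$r$ entry, and Lemma~\ref{lemma:powers_are_sym} makes all nodes of a common power-of-$r$ weight fully symmetric, so every tie-breaking choice yields the same shape. The paper splits this into four cases, whereas you collapse it into the single claim that equal weight forces equal shape; your explicit induction over intermediate stages and your direct normality argument that a power of $r$ can only come from merging $r$ copies of the next lower power also tighten steps the paper handles via Lemma~\ref{lemma:r_k_only}.
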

\begin{proof}
    Recall that the Huffman algorithm can potentially produce multiple $H$-trees if there exists a step in which multiple node sets tie for the minimal weight and these sets correspond to different tree shapes. We show that if multiple sets tie for the minimal weight, then each choice among the sets produces the same tree shape. Consider the length-$n$ weight vector $\vec{1}$ and a positive integer $i$. For $M_r^i(\vec{1})$, there are four cases:
    \begin{enumerate}
        \item $M_r^i(\vec{1})$ selects $r$ identical elements from the previous vector $M_r^{i-1}(\vec{1})$, or the $r$th and $(r+1)$th elements of $M_r^{i-1}$ take different values. In either case, a single set has the minimal weight.

        \item $M_r^i(\vec{1})$ selects $r$ identical terms of the form $r^k$ from more than $r$ such terms. By Lemma~\ref{lemma:powers_are_sym}, all terms $r^k$ represent fully symmetric $r$-furcating trees. Hence, irrespective of the choice of which $r$ terms are selected, they produce a fully symmetric $r$-furcating tree on $r^{k+1}$ leaves.

        \item $M_r^i(\vec{1})$ selects only terms of the form $r^k$ and $r^{k+1}$. Because at least one term $r^{k+1}$ term is selected, \emph{all} $r^k$ terms are selected. A choice might exist among $r^{k+1}$ terms; however, by Lemma~\ref{lemma:powers_are_sym}, all represent fully symmetric $r$-furcating trees, so that any set of $r^{k+1}$ terms results in the same tree shape.

        \item $M_r^i(\vec{1})$ selects terms of the form $r^k + b$ for $0 < b < r^{k+1}-r^k$. Because each sequence $M_r^i(\vec{1})$ is always normal by Lemma~\ref{lemma:merge_op}, at most one term has form $r^k + b$. Hence, the only possible choice is among terms of the form $r^{k+1}$. Again using Lemma~\ref{lemma:powers_are_sym}, each such choice leads to the same tree.
    \end{enumerate}
    We conclude that only one tree shape can be produced by the Huffman algorithm starting with $\vec{1}$.
\end{proof}
 
Now that we have shown that the $r$-furcating $H$-tree on $n$ leaves is unique, we characterize its shape. Note that a version of this proposition was demonstrated on p.~256 of \cite{Glicksman65} in the setting of sequentially merging identical units of tape.
\begin{prop} 
\label{prop:h_tree_shape}
    For $r \geq 2$ and $w \geq 1$. With a uniform weight vector, the unique $r$-furcating $H$-tree on $n=w(r-1) + 1$ leaves takes the form $U^*_n = (s-1)U_{r^{p-1}}^* \oplus U_{r^{p-1}+b}^* \oplus (r-s)U_{r^p}^*$, where 
    \begin{align*}
        p &= \lfloor \log_r n \rfloor, \\
        s &= \bigg\lfloor \frac{r^{p+1}-n}{r^p - r^{p-1}} \bigg\rfloor,\\
        b &= n-\big[sr^{p-1}+(r-s)r^p \big]. 
    \end{align*}
\end{prop}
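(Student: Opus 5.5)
Let $H_n$ denote the unique $r$-furcating $H$-tree on $n$ leaves with weight vector $\vec{1}$, which exists by Lemma~\ref{lemma:unique}. I will trace the sequences $M_r^i(\vec{1})$ through the Huffman algorithm, identify the $r$ weights merged at the final step, and then apply induction on $n$ to the subtrees. The first task is to describe the multiset $M_r^{i}(\vec{1})$ at the moment when its length first drops to $r$. By Lemma~\ref{lemma:merge_op} every intermediate sequence is normal. By Lemma~\ref{lemma:r_k_only} and Lemma~\ref{lemma:powers_are_sym}, each entry equal to a power $r^k$ corresponds to a fully symmetric subtree $U^*_{r^k}$, and there is at most one non-power entry. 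The final sequence of length $r$ is therefore normal with some exponent $k$. It has the form $(r^{k},\dots,r^k,\, r^k+b',\, r^{k+1},\dots,r^{k+1})$: say $s'-1$ copies of $r^k$, one entry $r^k+b'$ with $0\le b'<r^{k+1}-r^k$, and $r-s'$ copies of $r^{k+1}$. Its entries sum to $n$.

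The second task is to match this with the decomposition in the statement. Write $n=(s'-1)r^k+(r^k+b')+(r-s')r^{k+1} = s' r^{k}+(r-s')r^{k+1}+b'$. This has exactly the shape of the decomposition in Lemma~\ref{lem:10decomp} with $q=2$, where $k_p=r^{p-1}$ and $j_p=r^p-r^{p-1}$, under the identification $k=p-1$. Uniqueness of that decomposition then forces $k=p-1$, $s'=s$ and $b'=b$, with $(p,s,b)$ given by the formulas recorded after Lemma~\ref{lem:10decomp}.

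The main obstacle is the edge of this matching. When $b'=0$ the normal sequence contains no non-power element, so I must designate one of the $r^k$ entries as the ``$r^k+b$'' entry. This is consistent as long as $s'\ge1$. If all $r$ entries equal $r^{k+1}$, I instead rewrite the sequence with exponent $k+1$ and $s=r$. Either way the range conditions $1\le s\le r$ and $0\le b<j_p$ are met, so the lemma applies. I will also check that the exponent $k$ of the final sequence is well defined, i.e.\ that entries equal to $r^{k+1}$ are not ambiguously read with exponent $k+1$. The normalization just described settles this, and uniqueness in Lemma~\ref{lem:10decomp} gives $p=\lfloor\log_r n\rfloor$ as stated.

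Finally, I identify the subtrees. The entries equal to $r^{p-1}$ and $r^p$ root fully symmetric trees by Lemma~\ref{lemma:powers_are_sym} (trivially so for leaves when $p-1=0$). These are $U^*_{r^{p-1}}$ and $U^*_{r^p}$, the $H$-trees on those sizes by Corollary~\ref{cor:perf_power_H_tree}. For the entry $r^{p-1}+b$, I argue that the merges producing its subtree form exactly a Huffman run on $r^{p-1}+b$ unit weights. Its merges are interleaved with merges that build symmetric power-of-$r$ blocks, and greedy choices among weights within a subtree do not depend on those other blocks; the tie-handling in the proof of Lemma~\ref{lemma:unique} shows that the interleaving does not change shapes. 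Hence this subtree is the unique $H$-tree on $r^{p-1}+b$ leaves, which I denote $U^*_{r^{p-1}+b}$, defined recursively by the same formula. This gives $U_n^* = (s-1)U_{r^{p-1}}^* \oplus U_{r^{p-1}+b}^* \oplus (r-s)U_{r^p}^*$.
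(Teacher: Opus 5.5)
Your proposal follows the paper's proof essentially step for step: normality of $M_r^{w-1}(\vec{1})$ and uniqueness in Lemma~\ref{lem:10decomp} give the root's children, Lemma~\ref{lemma:powers_are_sym} and Corollary~\ref{cor:perf_power_H_tree} handle the power-of-$r$ children, and you recurse on the $r^{p-1}+b$ child. Your treatment of that last child is, once made precise, cleaner than the paper's repeated re-decomposition: each merge inside the subtree takes $r$ globally, hence locally, minimal weights, so those merges form a Huffman run on $r^{p-1}+b$ unit weights and Lemma~\ref{lemma:unique} applies. One caveat, shared with the paper: since $r^{p+1}-n=s(r^p-r^{p-1})-b$ with $0\le b<r^p-r^{p-1}$, the unique decomposition actually has $s=\lceil (r^{p+1}-n)/(r^p-r^{p-1})\rceil$, not the floor (for $r=2$, $n=5$ the floor gives $s=1$, $b=-1$, whereas the tree is $U_2^*\oplus U_3^*$), so your conclusion in terms of the Lemma~\ref{lem:10decomp} decomposition is right, but citing the displayed formula for $s$ fails whenever $b>0$.
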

\begin{proof}
    Choose $w \geq 1$ and let $n = w(r-1) + 1$. Let $\vec{1}$ be a length-$n$ uniform weight vector. By Lemma~\ref{lem:10decomp}, $n$ has a unique decomposition of the form $n = sr^{p-1} + (r-s)r^p + b$, where $p \geq 1$, $1 \leq s \leq r$, and $0 \leq b < r^{p}-r^{p-1}$. In this unique decomposition, $(p,s,b)$ follow the statement of the proposition.

    Consider $M_r^{w-1}(\vec{1})$, which is a sequence of length $r$. Because $M_r^{w-1} (\vec{1})$ is normal by Lemma~\ref{lemma:merge_op}, its $r$ elements all lie in $[r^k,r^{k+1}]$ for some $k$, and at most one element is not a power of $r$. The sum of these $r$ elements is $n$. Using our unique decomposition of $n$ in Lemma~\ref{lem:10decomp}, sequence $M_r^{w-1}(\vec{1})$ must have $s-1$ elements $r^{p-1}$, an element $r^{p-1} + b$, and $r-s$ elements $r^p$, where $(p,s,b)$ follow the statement of the proposition.

    By Lemma~\ref{lemma:powers_are_sym}, terms of the form $r^{p-1}$ and $r^p$ in $M_r^{w-1}(\vec{1})$ correspond to fully symmetric $r$-furcating trees, which, by Corollary~\ref{cor:perf_power_H_tree}, are the $r$-furcating $H$-trees on $r^{p-1}$ and $r^p$ leaves. 

    It remains to show that the subtree for $M_r^{w-1} (\vec{1})$ on $r^{p-1} + b$ leaves is the $r$-furcating $H$-tree on $r^{p-1} + b$ leaves. By Lemma~\ref{lem:10decomp}, $r^{p-1}+b$ has a unique decomposition as the sum of terms $r^{p-2}, r^{p-1}, r^{p-2} + b^{'}$, where $0 \leq b^{'} < r^{p-1}-r^{p-2}$. By Lemma~\ref{lemma:powers_are_sym}, the $r^{p-2}$ and $r^{p-1}$ terms correspond to fully symmetric $r$-furcating trees. Continuing with the term $r^{p-2} + b^{'}$, by Lemma~\ref{lem:10decomp}, $r^{p-2} + b^{'}$ has a unique decomposition as the sum of terms $r^{p-3}, r^{p-2}, r^{p-3}+b^{''}$, where $0 \leq b^{''} \leq r^{p-2}-r^{p-3}$. By Lemma~\ref{lemma:powers_are_sym}, the $r^{p-3}$ and $r^{p-2}$ terms correspond to fully symmetric $r$-furcating trees. 

    Continue decomposing the subtree with the additive term $b^{'\cdots'}$, until the inequality satisfied by $b^{'\cdots '}$ is $0 \leq b < r-1$. When this stage is reached, $b^{'\cdots'}$ must equal 0 or 1, as nodes $v$ of an $r$-furcating tree cannot have a number of descendant leaves $m(v)$ strictly between 1 and $r$. Each subtree at this stage has a number of descendant leaves that is a power of $r$, so that all are fully symmetric $r$-furcating trees by Lemma~\ref{lemma:powers_are_sym}. Hence, the subtree for $M_r^{w-1} (\vec{1})$ on $r^{p-1}+b$ leaves and the $H$-tree on $r^{p-1}+b$ leaves have identical decompositions at each step, so the subtree on $r^{p-1} + b$ leaves is the $r$-furcating $H$-tree on $r^{p-1} + b$ leaves. Using the uniqueness of the $r$-furcating $H$-tree in Lemma~\ref{lemma:unique}, we have uniquely established the shape of this tree.
\end{proof}

\section{The maximally probable $r$-furcating tree}
\label{app:completing}

With the unique $r$-furcating $H$-tree of Proposition~\ref{prop:h_tree_shape} in hand, we now show that it is maximally probable. For this demonstration, we use a second aspect of the Huffman algorithm, the \emph{weight sequence}, which connects to majorization theory. An $r$-furcating unlabeled topology $T$ with a weight vector for its leaves has a weight sequence $\omega(T) = \big(\omega(T_1), \omega(T_2), \ldots, \omega(T_{w}) \big)$, a vector of the internal node weights in non-decreasing order. 

The weight sequence corresponds to the mergings of the Huffman algorithm; $\omega(T_i)$ corresponds to the sum of the weights of the $r$ nodes merged in step $i$. 
Note that although the $H$-tree is unique for the uniform weight vector, in general, given an initial weight vector \emph{and} a weight sequence, the $H$-tree is not necessarily unique. For example, in Figure \ref{fig:distinct_trees_same_weight}, the two distinct tree topologies beginning from the same weight vector $(1,1,1,1,1,3,4)$ both have the weight sequence $(3, 5, 12)$.

We now state a property of the weight sequence for $r$-furcating $H$-trees. 



\begin{lemma} [\cite{Glassey76}, Lemma 1, p.~371] \label{lemma:hu_majorization}
    Fix a weight vector $\vec{c}$ for $n$ leaves, $n=w(r-1)+1$, $w \geq 1$. Let $S$ be an $r$-furcating unlabeled topology with $n$ leaves. $S$ is an $H$-tree associated with $\vec{c}$ if and only if for every $r$-furcating unlabeled topology $T$ associated with $\vec{c}$, $\omega(T) \prec^w \omega(S)$.
\end{lemma}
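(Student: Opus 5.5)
Both directions run through the cost functional $F(T)=\sum_{i=1}^{w}\omega(T_i)$, which equals the weighted external path length $\sum_j c_j\,\mathrm{depth}(\ell_j)$ because each leaf weight is counted once at every internal ancestor. The key structural fact is the classical Huffman exchange/induction argument generalized to $r$-furcation. Conditions on $n$ mean no dummy leaves are needed. The statement is that for every $k$, the sum of the $k$ smallest internal weights of an $H$-tree is no larger than that of any other tree. I would prove this by induction on $n$, equivalently on $w$.

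\textbf{Forward direction} (if $S$ is an $H$-tree, then $\omega(T)\prec^w\omega(S)$ for all $T$). The base case $w=1$ is trivial, since there is only one tree. For the inductive step, let the first Huffman merge combine the $r$ smallest weights $c_{(1)},\dots,c_{(r)}$ into a node of weight $\alpha=\sum_{i\le r}c_{(i)}$. This produces a reduced weight vector $\vec c\,'$ of length $n-(r-1)$, and $S$ corresponds to an $H$-tree $S'$ on $\vec c\,'$. Then $\omega(S)$ is $\omega(S')$ with $\alpha$ inserted. Also $\alpha$ is the smallest possible weight of any internal node in any tree, since any internal node has at least $r$ leaf descendants. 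Hence $\alpha$ is the first entry of $\omega(S)$.

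Now take an arbitrary $T$. Let $u$ be a minimal-weight internal node of $T$ (so $\omega(T)_1=\omega(u)\ge\alpha$). Choose a deepest internal node $v$ all of whose children are leaves. Swap the $r$ smallest-weight leaves into the children of $v$; this exchange does not increase any prefix sum of the sorted weight sequence. Justifying that monotonicity is where care is needed. I would argue it through the partial order: swapping a lighter leaf into a deeper position weakly decreases the weights of all internal nodes on the path segment between the two positions, and leaves the other weights unchanged. Thus the swapped tree $\tilde T$ has $\omega(\tilde T)\le\omega(T)$ entrywise after sorting, so $\omega(T)\prec^w\omega(\tilde T)$.

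In $\tilde T$, contracting $v$ to a leaf of weight $\alpha$ gives a tree $\tilde T'$ on $\vec c\,'$. By induction, $\omega(\tilde T')\prec^w\omega(S')$. Inserting the common value $\alpha$, which is the minimum in both, into both sequences preserves the prefix-sum inequalities: each prefix of the augmented sequence is $\alpha$ plus a prefix of the original. Hence $\omega(\tilde T)\prec^w\omega(S)$. Combining with the previous step by transitivity gives $\omega(T)\prec^w\omega(S)$.

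\textbf{Converse.} Suppose $S$ satisfies $\omega(T)\prec^w\omega(S)$ for all $T$. Taking $T$ to be an $H$-tree $H$ and $k=w$ gives $F(S)\le F(H)$. The forward direction applied to $H$ gives $\omega(S)\prec^w\omega(H)$, so $F(H)\le F(S)$. Hence $S$ has equal total cost and equal prefix sums at every $k$, so $\omega(S)=\omega(H)$. In particular, the smallest internal weight of $S$ is $\alpha$. The node realizing it must be a broomstick over $r$ leaves of total weight $\alpha$, hence a valid first Huffman merge. Contracting that node and applying induction to the reduced tree then shows that $S$ is an $H$-tree.

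\textbf{Main obstacle.} The main obstacle is the exchange step: showing that the leaf swap yields entrywise-dominated sorted weight sequences, not merely a smaller total. I would handle it by noting that if the multiset of internal weights decreases node by node under a weight-preserving bijection of internal nodes, then the sorted vectors compare entrywise. Entrywise comparison in turn implies the $\prec^w$ prefix inequalities.
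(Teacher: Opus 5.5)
The paper does not prove this lemma; it imports it from Glassey and Karp, so your argument has to stand on its own. Your converse is fine once the forward direction is available. The squeeze gives $\omega(S)=\omega(H)$. From a node of weight $\alpha$ you may need to descend to a broomstick of weight $\alpha$, since zero weights allow a non-broomstick node of weight $\alpha$. The reduced tree inherits the hypothesis because $\omega(S')=\omega(H')$ for the $H$-tree $H'$ that begins with that same merge.

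The gap is in the forward direction, at exactly the step you flagged. The claim that moving the $r$ lightest leaves under a deepest internal node $v$ yields $\omega(T)\prec^w\omega(\tilde T)$ is false. Exchange a lighter leaf at position $p$ with a heavier one at a deeper position $q$. The internal nodes on the $q$-side of the path below the common ancestor drop by $\delta$, but those on the $p$-side rise by $\delta$; they are not unchanged. Only the total is guaranteed to fall.

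Here is a concrete counterexample with $r=2$. Let $T$ have root children $y_2$ and $x_3$, with
\[
y_2=\{y_1,s\},\quad y_1=\{a,z\},\quad x_3=\{x_2,u\},\quad x_2=\{v,g\},\quad v=\{q,t\},
\]
and weights $a=z=1$, $s=u=q=t=2$, $g=100$. Then $v$ is the unique deepest internal node, and $a,z$ are the two lightest leaves. Swapping $\{a,z\}\leftrightarrow\{q,t\}$ changes the sorted weight sequence from $(2,4,4,104,106,110)$ to $(2,4,6,102,104,110)$, because $y_2$ rises from $4$ to $6$. The third prefix sum goes from $10$ to $12$, so $\omega(T)\prec^w\omega(\tilde T)$ fails. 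The two sequences are in fact incomparable, so your transitivity chain breaks.

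A repair keeps your exchange idea but applies it only to total costs of truncations, where it is valid.
\begin{enumerate}
\item With nonnegative weights, a child's weight never exceeds its parent's. So for each $k$, the $k$ smallest entries of $\omega(T)$ can be realized by a downward-closed set of $k$ internal nodes. Such a set is a forest of $k$ partial $r$-merges on the leaf weights, and its cost is exactly that prefix sum.
\item Among all $k$-merge forests, the first $k$ greedy merges have minimal cost. Write the cost as $\sum_\ell c_\ell d(\ell)$, where $d(\ell)$ counts forest ancestors of leaf $\ell$. By the rearrangement inequality, you may place the $r$ lightest leaves under a deepest forest node without increasing the cost. Then contract that node to a leaf of weight $\alpha$ and induct on $k$.
\item Huffman merge weights are non-decreasing in order of creation. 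Hence the first $k$ merges of $S$ are the $k$ smallest entries of $\omega(S)$.
\end{enumerate}
Together these give every prefix inequality of $\omega(T)\prec^w\omega(S)$ directly, with no whole-tree exchange.
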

As a consequence of this lemma, if multiple distinct $H$-trees were to be associated with the weight vector $\vec{c}$, then they must have the same weight sequence. With all the necessary lemmas demonstrated, we finally prove our main result.

\begin{theorem} \label{thm:r_furcating_max_shape}
    Fix $r\geq 2$ and $w \geq 1$. The unique $r$-furcating unlabeled topology whose labelings maximize the number of labeled histories among labeled topologies with $n=w(r-1)+1$ leaves takes the form $U^*_n = (s-1)U_{r^{p-1}}^* \oplus U_{r^{p-1}+b}^* \oplus (r-s)U_{r^p}^*$, where 
    \begin{align*}
        p &= \lfloor \log_r n \rfloor, \\
        s &= \bigg\lfloor \frac{r^{p+1}-n}{r^p - r^{p-1}} \bigg\rfloor,\\
        b &= n-\big[sr^{p-1}+(r-s)r^p \big]. 
    \end{align*}
\end{theorem}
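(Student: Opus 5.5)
The plan is to combine the minimization in eq.~\ref{eq:min} with the majorization property of $H$-trees. With uniform weight vector $\vec{1}$, every internal node $v$ of an $r$-furcating unlabeled topology $T$ on $n$ leaves has weight $m(v)$. Hence the weight sequence $\omega(T)$ is exactly the non-decreasing list of values $m(v)$, $v\in V^0(T)$, and it has length $w$ for every $T$. By eq.~\ref{eq:min}, maximizing $N(T)$ is equivalent to minimizing $\sum_{v\in V^0(T)}\log[m(v)-1]$. I will work with the shifted vector $\omega(T)-\vec{1}$, whose entries are $m(v)-1\geq r-1>0$ because each internal node has at least $r\geq 2$ descendant leaves. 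This keeps the entries in $\mathbb{R}_{>0}^w$, so Lemma~\ref{lemma:log_sum} applies.

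Let $S$ be the unique $H$-tree for $\vec{1}$ given by Lemma~\ref{lemma:unique}. By Lemma~\ref{lemma:hu_majorization}, $\omega(T)\prec^w\omega(S)$ for every $r$-furcating topology $T$ on $n$ leaves. Weak supermajorization is a family of partial-sum inequalities on non-decreasing rearrangements. Subtracting the same constant vector $\vec{1}$ from both sides preserves the ordering and subtracts $k$ from each $k$-th partial sum on both sides. It follows that $\omega(T)-\vec{1}\prec^w\omega(S)-\vec{1}$.

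Now split into two cases. If $\omega(T)$ is a permutation of $\omega(S)$, the two sorted vectors are equal, so the log-sums coincide and $N(T)=N(S)$. Otherwise Lemma~\ref{lemma:log_sum} gives $\sum\log(m_S(v)-1)<\sum\log(m_T(v)-1)$, so $N(S)>N(T)$. Therefore $S$ attains the maximum. Its shape is the one stated, since Proposition~\ref{prop:h_tree_shape} already shows the $H$-tree has exactly the form $(s-1)U^*_{r^{p-1}}\oplus U^*_{r^{p-1}+b}\oplus(r-s)U^*_{r^p}$. Its subtrees are themselves the $H$-trees, hence (inductively) the maximally probable trees of their sizes.

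The main obstacle is uniqueness: I must exclude a topology $T\neq S$ with $\omega(T)=\omega(S)$. The plan is to show that any $T$ attaining the maximum is itself an $H$-tree, and then invoke Lemma~\ref{lemma:unique}. Suppose $T$ attains the maximum. By the strict inequality above, $\omega(T)$ must equal $\omega(S)$. Then for every topology $T'$ we get $\omega(T')\prec^w\omega(S)=\omega(T)$. The ``if'' direction of Lemma~\ref{lemma:hu_majorization} then says $T$ is an $H$-tree for $\vec{1}$, and Lemma~\ref{lemma:unique} forces $T=S$ as unlabeled topologies. The care needed is in checking that Lemma~\ref{lemma:hu_majorization} is used in its ``if and only if'' form. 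If that converse is considered delicate, the fallback is an induction on $n$. Since $\omega(T)=\omega(S)$ fixes the multiset of clade sizes, the root children sizes of $T$ must be those of $S$. Each subtree must then maximize its own labeled-history count, because $N$ factors by the recursion of eq.~3.6, and the induction hypothesis finishes the argument.
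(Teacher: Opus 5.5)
Your main line is the paper's proof. You shift the weight sequences by $\vec{1}$, combine Lemma~\ref{lemma:hu_majorization} with Lemma~\ref{lemma:log_sum}, and get uniqueness because any $T$ with $\omega(T)=\omega(S)$ satisfies the ``if'' direction of Lemma~\ref{lemma:hu_majorization}, hence is an $H$-tree and equals $S$ by Lemma~\ref{lemma:unique}; this is precisely the step hidden in the paper's assertion that $\omega(T)\neq\omega(S)$, which you have simply made explicit.

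Your fallback is unnecessary, and its first step is not valid in general: for $r\geq 3$ the multiset of clade sizes does not determine the sizes of the root's children. For example, with $r=3$, $B$ a broomstick and $L$ a leaf, the trees $(2B\oplus L)\oplus(B\oplus 2L)\oplus L$ and $\big((B\oplus 2L)\oplus 2L\big)\oplus 2B$ both have clade sizes $\{13,7,5,3,3,3\}$ but root splits $(7,5,1)$ and $(7,3,3)$.
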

\begin{proof}
    Recall from eq.~\ref{eq:min} that we are looking for an $r$-furcating unlabeled topology on $n$ leaves that minimizes
    \begin{equation}
    \label{eq:objective}
        \sum_{v \in V^0(T)} \log \big[ m(v) - 1 \big].
    \end{equation}
    We wish to show that only one unlabeled topology achieves the minimal value. We claim that this unlabeled topology is the $r$-furcating $H$-tree on $n$ leaves constructed from the weight vector $\vec{1}$.

    By Proposition $\ref{prop:h_tree_shape}$, for the weight vector $\vec{1}$, the unique $r$-furcating $H$-tree, $S$, has the shape defined in the statement of the theorem. Consider another $r$-furcating unlabeled topology on $n$ leaves, $T$, distinct from $S$. Because $S$ is the unique $r$-furcating $H$-tree, by Lemma~\ref{lemma:hu_majorization}, $\omega(T) \prec^w \omega(S)$ and $\omega(T) \not = \omega(S)$. 

    Recall that elements of $\omega(S)$ and $\omega(T)$ correspond to the weights of internal nodes, which for the weight vector $\vec{1}$ corresponds to $m(v)$ for internal node $v \in V^0(S)$ or $v \in V^0(T).$ 

    The objective function in eq.~\ref{eq:objective} has terms $m(v) - 1$, so we construct $\omega'(S) = \omega(S) - \vec{1}$ and $\omega'(T) = \omega(T) - \vec{1}$. Because $\omega_i(S), \omega_i(T) \geq r$ for all $i \in \{1, 2, \ldots, w\}$, we know that $\omega'(S), \omega'(T) \in \mathbb{R}_{>0}^w$. In addition, we still have $\omega'(T) \prec^w \omega'(S)$ and $\omega'(S) \not = \omega'(T)$. Each element of $\omega'(S)$ represents $m(v) - 1$ for some $v \in V^0(S)$, and each element of $\omega'(T)$ represents $m(v)-1$ for some $v \in V^0(T)$. 

    Because $\omega'(S), \omega'(T) \in \mathbb{R}^w_{>0}$, the non--decreasing $\omega'(S)$ is not a permutation of the non-decreasing $\omega'(T)$, and $\omega'(T) \prec^w \omega'(S)$, by Lemma~\ref{lemma:log_sum}, we have that
    $$\sum_{i=1}^w \log \big[ \omega'_i(S) \big] < \sum_{i=1}^w \log \big[ \omega'_i(T) \big],$$
    or equivalently, 
    $$\sum_{v \in V^0(S)} \log \big[ m(v) - 1 \big] < \sum_{v \in V^0(T)} \log \big[ m(v) - 1 \big].$$
    Because the argument holds for arbitrary $T \neq S$, we conclude that $S = U^*_n$ is the unique minimizer of our objective function in eq.~\ref{eq:objective}, as desired.
\end{proof}

We have obtained the unique $r$-furcating unlabeled topology that maximizes the number of labeled histories for its associated labeled topologies. In doing so, we have verified Conjecture 13 of \cite{Dickey25}. The $r=2$ case of the proof provides a new proof of Theorem~\ref{thm: hammersley}, the result of \cite{Hammersley74} characterizing the bifurcating unlabeled topology whose labelings maximize the number of labeled histories. 

\section{Extensions to simultaneity}
\label{app:withsim}


With the maximally probable $r$-furcating unlabeled topology---without simultaneity---established in Theorem~\ref{thm:r_furcating_max_shape}, we present a conjecture for the maximally probable $r$-furcating unlabeled topology \emph{with} simultaneity, generalizing Conjecture 14 for bifurcation in \cite{Dickey26}. 

Following \cite{Dickey25, Dickey26, King23}, \emph{tie-permitting} labeled histories allow certain internal nodes to possess the same time; it is possible for internal nodes $u$ and $v$ with $u \neq v$ to possess the same time in some tie-permitting labeled history if and only if $u$ and $v$ do not have an ancestor--descendant relationship. A time for at least one internal node is an \emph{event}; events are numbered backward in time from the leaves toward the root. 

Theorem 15 of \cite{Dickey25} described a method of counting tie-permitting labeled histories for a labeled topology, or equivalently, for an arbitrary labeling of an unlabeled topology. In Table \ref{tab:tri_opt}, for odd $n$, $3 \leq n \leq 29$, we compute the maximal number of tie-permitting labeled histories across all trifurcating unlabeled topologies with $n$ leaves and $z$ events, for each $z$ in $\lceil \log_3 n \rceil \leq z \leq (n-1)/2$. For $n=1$, one labeled history occurs for $z=0$, and 0 labeled histories occur for all other $z$. As is true in analogous computations in the bifurcating case (Tables 3 and 4 of \cite{Dickey26}), all $(n,z)$ pairs in the table are achieved by the unlabeled topology in Theorem~\ref{thm:r_furcating_max_shape}, but the maximizing unlabeled topology is not necessarily unique. For fixed $n$ in the table, this unlabeled topology also maximizes the sum of the number of tie-permitting labeled histories across values of $z$.
\begin{conjecture} 
\label{conj:max_r_sim}
Consider the set of rooted $r$-furcating unlabeled topologies with $n$ leaves, permitting simultaneous $r$-furcations. 
\begin{enumerate}[label = (\roman*)]
    \item The unlabeled topology whose labelings have the largest number of tie-permitting labeled histories takes the form in Theorem~\ref{thm:r_furcating_max_shape}. This topology is unique in having the maximal value.
    \item Further, this same unlabeled topology has the largest number of tie-permitting labeled histories with exactly $z$ events, $\lceil \log_r n \rceil \leq z \leq (n-1)/(r-1)$. This topology is not necessarily unique in having the maximal value.
\end{enumerate}
\end{conjecture}
Generalizing an observation seen in the bifurcating case \cite{Dickey26}, for $r$-furcating trees, $r \geq 2$, the unlabeled topology in Theorem~\ref{thm:r_furcating_max_shape} is the unique maximizing unlabeled topology for $z = (n-1)/(r-1)$---as this maximal value of $z$ given $n$ and $r$ permits only non-simultaneous internal nodes---so that if part (ii) of the conjecture can be demonstrated, then part (i) follows.

\begin{figure}
    \centering
    \includegraphics[width=0.8\linewidth]{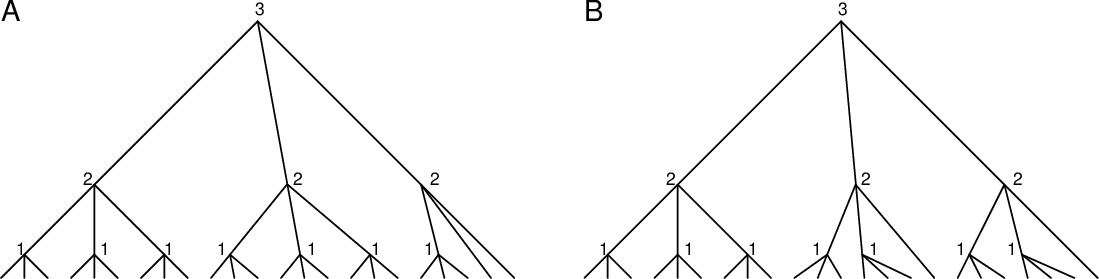}
    \caption{The two rooted trifurcating unlabeled topologies whose labelings produce the maximal number of tie-permitting labeled histories for $(n,z) = (23, 3)$. Both unlabeled topologies produce 1 tie-permitting labeled history. Internal nodes are annotated by the events to which they are assigned. (A) The unlabeled topology in Theorem~\ref{thm:r_furcating_max_shape}. (B) An alternative unlabeled topology.}
    \label{fig:non_unique_tri}
\end{figure}

As stated in part (ii), the maximum at fixed $(n,z)$ does not always occur for a unique unlabeled topology. For example, the two trifurcating tree shapes in Figure \ref{fig:non_unique_tri} produce the maximal number of tie-permitting labeled histories for $(n,z) = (23, 3)$, namely 1.

This pattern of non-uniqueness generalizes. For each $k \geq 3$, for $(n, z) = (2 \cdot 3^{k-1} + 3^{k-2} + 2, k)$, non-uniqueness occurs with two unlabeled topologies of similar structure. One of the unlabeled topologies---call it $T_1$---has subtrees $L_1, C_1, R_1$ of size $(|L_1|, |C_1|, |R_1|)=(3^{k-1}, 3^{k-1}, 3^{k-2} + 2)$. The other, $T_2$, has subtrees $L_2, C_2, R_2$ of size $(|L_2|, |C_2|, |R_2|)=(3^{k-1}, 3^{k-1}-2, 3^{k-2} + 4)$. In the two unlabeled topologies, subtrees $L_1, C_1, R_1, L_2, C_2, R_2$ have the unlabeled topologies in Theorem~\ref{thm:r_furcating_max_shape}. $T_1$ and $T_2$ both produce 1 tie-permitting labeled history, as all internal nodes lie on a length-$k$ path between leaves and the root.

In fact, we can generalize this tree structure to $r$-furcating trees, $r \geq 3$. For each $k \geq 3$, for $(n, z) = \big((r-1) \cdot r^{k-1} + r^{k-2} + (r-1) , k \big)$, non-uniqueness occurs with unlabeled topologies of similar structure. We construct just two of these unlabeled topologies. $T_1$ has subtrees $S^1_1, S^2_1, \ldots, S^r_1$ of size $(|S^1_1|, |S^2_1|, \ldots, |S^{r-1}_{1}|, |S_1^r|)= \big(r^{k-1}, r^{k-1}, \ldots, r^{k-1}, r^{k-2}+(r-1)\big)$. $T_2$ has subtrees $S^1_2, S^2_2, \ldots, S^r_2$ of size $(|S^1_2|, |S^2_2|, \ldots, |S^{r-1}_{2}|, |S_2^r|)=\big(r^{k-1}, r^{k-1}, \ldots, r^{k-1}, r^{k-1} - (r-1), r^{k-2}+ 2(r-1)\big)$. In the two unlabeled topologies, all subtrees have the unlabeled topologies in Theorem~\ref{thm:r_furcating_max_shape}. Unlabeled topologies $T_1$ and $T_2$ both produce 1 tie-permitting labeled history because all internal nodes lie on a length-$k$ path between leaves and the root.

\begin{table}[tb]
    \centering
    \footnotesize
    \begin{tabular}{|c|c|c|c|c|c|c|c|c|c|c|c|c|c|c|}
        \hline
          & \multicolumn{14}{c|}{Number of leaves, $n$}  \\ 
        \cline{2-15}
    $z$   & 3 & 5 & 7 &  9 & 11 &  13 &  15 &    17 &     19 &      21 &        23 &         25 &          27 &            29 \\ \hline
    1     & 1 & 1 & 1 &  1 &  0 &   0 &   0 &     0 &      0 &       0 &         0 &          0 &           0 &             0 \\ \hline
    2     & 0 & 0 & 1 &  6 &  0 &   0 &   0 &     0 &      0 &       0 &         0 &          0 &           0 &             0 \\ \hline
    3     & 0 & 0 & 2 &  6 &  4 &   4 &   4 &     2 &      2 &       2 &         1 &          1 &           1 &             0 \\ \hline
    4     & 0 & 0 & 0 &  6 & 15 &  33 &  69 &    75 &    129 &     237 &       240 &        402 &         726 &           324 \\ \hline
    5     & 0 & 0 & 0 &  0 & 12 &  68 & 276 &   552 &  1,488 &   4,224 &     6,810 &     16,530 &      43,746 &        38,880 \\ \hline
    6     & 0 & 0 & 0 &  0 &  0 &  40 & 390 & 1,470 &  6,250 &  26,490 &    63,540 &    213,320 &     774,000 &     1,072,360 \\ \hline
    7     & 0 & 0 & 0 &  0 &  0 &   0 & 180 & 1,620 & 11,820 &  76,680 &   271,170 &  1,248,450 &   6,075,900 &    12,061,785 \\ \hline
    8     & 0 & 0 & 0 &  0 &  0 &   0 &   0 &   630 & 10,290 & 112,140 &   604,800 &  3,886,260 &  25,424,280 &    70,014,882 \\ \hline
    9     & 0 & 0 & 0 &  0 &  0 &   0 &   0 &     0 &  3,360 &  80,640 &   730,800 &  6,879,600 &  61,923,960 &   235,479,636 \\ \hline
    10    & 0 & 0 & 0 &  0 &  0 &   0 &   0 &     0 &      0 &  22,680 &   453,600 &  6,955,200 &  90,720,000 &   485,318,736 \\ \hline
    11    & 0 & 0 & 0 &  0 &  0 &   0 &   0 &     0 &      0 &       0 &   113,400 &  3,742,200 &  78,813,000 &   622,588,680 \\ \hline
    12    & 0 & 0 & 0 &  0 &  0 &   0 &   0 &     0 &      0 &       0 &         0 &    831,600 &  37,422,000 &   485,155,440 \\ \hline
    13    & 0 & 0 & 0 &  0 &  0 &   0 &   0 &     0 &      0 &       0 &         0 &          0 &   7,484,400 &   210,311,640 \\ \hline
    14    & 0 & 0 & 0 &  0 &  0 &   0 &   0 &     0 &      0 &       0 &         0 &          0 &           0 &    38,918,880 \\ \hline
    Total & 1 & 1 & 4 & 19 & 31 & 145 & 919 & 4,349 & 37,029 & 356,733 & 1,738,361 & 17,617,292 & 210,188,713 & 1,545,806,523 \\
    \hline
\end{tabular}
    \caption{For $3 \leq n \leq 29$ leaves, the number of tie-permitting labeled histories for the maximally probable trifurcating unlabeled topology, allowing simultaneity. Columns correspond to the number of leaves, $n$, and rows to the number of events, $z$, $\lceil \log_3 n \rceil \leq z \leq (n-1)/2$. Each entry is found by computing the number of tie-permitting labeled histories, $E(T, z)$, using Proposition 11 of \cite{Dickey26}, where $r=3$, for all trifurcating unlabeled topologies on $n$ leaves, and taking the maximum. The maximizing unlabeled topology is, in all $(n,z)$ in the table, the unlabeled topology described by Theorem~\ref{thm:r_furcating_max_shape}, but this topology might not be the only maximizing unlabeled topology. The maximizing total, summing across rows, is unique. The $n=23$ column corresponds to row 5 of OEIS A122193, and the $n=27$ column corresponds to row 6 of OEIS A122193.}
    \label{tab:tri_opt}
\end{table}


\section{Discussion}
\label{app:discussion}

We have generalized the Harding--Hammersley--Grimmett result describing the shape of the maximally probable bifurcating unlabeled topology,  identifying the maximally probable \emph{$r$-furcating} unlabeled topology for $r \geq 2$ (Theorem \ref{thm:r_furcating_max_shape}). In completing this generalization, we have obtained a new proof of the characterization of the maximally probable unlabeled topology for the bifurcating case (Theorem \ref{thm: hammersley}). Our theorem for general $r$ verifies a conjecture from \cite{Dickey25}.

In the bifurcating case, maximally probable unlabeled topologies have been used in the mathematical analysis of likely outcomes of evolutionary processes under the Yule--Harding model for evolutionary descent~\cite{Degnan06}. These topologies have appeared as maximizers for other functions computed from bifurcating trees~\cite{DisantoAndRosenberg15}, and they have also inspired the development of analogous concepts of ``maximally probable'' for other classes of bifurcating trees~\cite{DegnanEtAl12:tcbb, DisantoEtAl19}. By obtaining the maximally probable $r$-furcating unlabeled topologies, our work here suggests that the applications of maximally probable trees in the bifurcating case might be possible to extend to settings with $r$-furcation. 

In \cite{Dickey25}, our conjecture for the shape of the maximally probable unlabeled topology was inspired in part by the work of \cite{Batty82}, which considered recursive minimizations building on the solution presented by \cite{Hammersley74} for the case of bifurcating trees. Our proof of the conjecture here, however, is based on an entirely different approach. In particular, our phylogenetic maximization problem, tracing to \cite{Harding1971}, is solved via connections to Huffman trees from information theory~\cite{Glassey76, Huffman52} and to a tape-merging algorithm from the early years of computer science~\cite{Glicksman65}. In Section \ref{app:huffman}, after reviewing the Huffman algorithm and its associated $H$-trees, we discussed the $r$-merge operator and normal sequences (Definition \ref{def:normal_seq}), and we showed that the $H$-tree produced with the uniform weight vector on tree leaves is unique (Proposition \ref{prop:h_tree_shape}). In Section \ref{app:completing}, the final steps in the proof consider weight sequences for an $H$-tree and connect the $H$-tree to majorization theory (Lemma \ref{lemma:hu_majorization}) before finally showing that the unique $H$-tree with a uniform weight vector is the conjectured unique maximally probable $r$-furcating unlabeled topology (Theorem \ref{thm:r_furcating_max_shape}). 

The information-theoretic connection that we have uncovered here for a mathematical phylogenetics problem merits further investigation. Huffman trees are grounded in the construction and analysis of prefix-free codes, in which each symbol from a source alphabet is encoded by a codeword in such a way that no codeword is a prefix of any other codeword~\cite[Section 5.6, p.~92]{Cover05}. In a setting with a uniform weight vector, the optimal prefix-free code for an alphabet with a uniform probability distribution can be extracted from the tree constructed by the Huffman algorithm, where an optimal code is one that minimizes the expected codeword length. Connections between the setting of evolutionary trees and information theory might be of interest in light of recent analyses of various tree encodings in mathematical phylogenetics~\cite{ChauveEtAl25, ColijnAndPlazzotta18, KimEtAl20:pnas, MarancaAndRosenberg24, PennEtAl25}.

In Section $\ref{app:withsim}$, we extended our analysis to $r$-furcating trees with simultaneity, presenting a conjecture for the maximally probable $r$-furcating unlabeled topology in the setting in which labeled histories permit simultaneity (Conjecture \ref{conj:max_r_sim}), with Table \ref{tab:tri_opt} confirming the conjecture for values for $3 \leq n \leq 29$ for the trifucating case. This conjecture provides a natural extension of a previous conjecture in the bifurcating case (Conjecture 14 of \cite{Dickey26}), claiming that the maximally probable $r$-furcating unlabeled topology in the non-simultaneous case is also the maximally probable $r$-furcating unlabeled topology with simultaneity.

\bigskip
\noindent {\bf Acknowledgments.} Grant support was provided by National Science Foundation grant DMS-2450005.

\small
\bibliographystyle{plain}
\bibliography{multifurcating2}
\clearpage


\end{document}